\newtheoremstyle{teorema}{18pt}{15pt}{\parskip=0pt\itshape}
{}{}{}{ }{\textbf{\thmname{#1} \thmnumber{#2}\thmnote{ (#3)}.}}
\newtheoremstyle{plano}{18pt}{15pt}{\parskip=0pt}
{}{}{}{ }{\textbf{\thmname{#1} \thmnumber{#2}.} \thmnote{(\textit{#3})}}
\theoremstyle{teorema}
\newtheorem{teor}{Theorem}[section]
\newtheorem{lema}[teor]{Lemma}
\newtheorem{coro}[teor]{Corollary}
\newtheorem{prop}[teor]{Proposition}
\theoremstyle{plano}
\newtheorem{obse}[teor]{Remark}
\newcommand{\Z}{\mathbb{Z}}
\newcommand{\R}{\mathbb{R}}
\renewcommand{\H}{\mathbb{H}}
\newcommand{\sm}{\smallsetminus}
\newcommand{\Sp}{\mathbb{S}}
\newcommand{\M}{\mathbb{M}}
\newcommand{\wt}{\widetilde}
\newcommand{\p}{\partial}
\DeclareMathOperator{\sen}{sen}
\DeclareMathOperator{\arctanh}{arctanh}
\DeclareMathOperator{\arccosh}{arccosh}
\newcommand{\dt}{\,\mathrm{d}}
\newcommand{\dist}{\mathop{\rm dist}\nolimits}
\newcommand{\Ric}{\mathop{\rm Ric}\nolimits}
\newcommand{\arcsinh}{\mathop{\rm arcsinh}\nolimits}
\newcommand{\Long}{\mathop{\rm Long}\nolimits}
\author{José M. Manzano}
\title{Estimates for constant mean curvature graphs in $M\times\R$}
\thanks{Research partially supported by a Spanish MEC-FEDER Grant no. MTM2007-61775 and a Regional J. Andaluc\'\i a Grant no. P06-FQM-01642.}
\keywords{product manifolds, constant mean curvature, invariant surfaces, boundary curvature estimates, height estimates}
\subjclass[2000]{Primary 53A10, Secondary 49Q05, 53C42}
\address{José Miguel Manzano, Dpto. Geometría y Topología, Universidad de Granada. Email address: {\tt jmmanzano@ugr.es}}
\begin{document}

\begin{abstract}
We will discuss some sharp estimates for a constant mean curvature graph $\Sigma$ in a Riemannian 3-manifold $M\times\R$ whose boundary $\partial\Sigma$ is contained in a slice $M\times\{t_0\}$. We will start by giving sharp lower bounds for the geodesic curvature of the boundary and improve these bounds when assuming additional restrictions on the maximum height that such a surface reaches in $M\times\R$. We will also give a bound for the distance from an interior point to the boundary in terms of the height at that point, and characterize when these bounds are attained.
\end{abstract}

\maketitle

\section{Introduction}

\noindent Constant mean curvature surfaces in several $3$-manifolds have been extensively studied in recent times. One of the most important families of such $3$-manifolds are product spaces $M\times\R$, $M$ being a Riemannian surface, which includes the homogeneous spaces $\R^3$, $\H^2\times\R$ and $\Sp^2\times\R$. It was Rosenberg in \cite{rose2} who started the study of minimal surfaces in $M\times\R$ and, since then, many papers in this setting have appeared.

We will focus on constant mean curvature $H>0$ graphs $\Sigma$ ($H$-graphs in the sequel) in $M\times\R$ whose boundary $\partial \Sigma$ lies in some slice $M\times\{t_0\}$. If we denote by $c$ the infimum of the Gaussian curvature $K_M$ of the domain $\Omega\subset M$ over which $\Sigma$ is a graph, we will assume the hypothesis $4H^2+c>0$. It is worth mentioning that the sign of $4H^2+c$ makes a qualitative difference in the geometry of $H$-surfaces in $\M^2(c)\times\R$, where $\M^2(c)$ will stand for the simply connected surface with constant Gaussian curvature $c$. For instance. $4H^2+c>0$ is the natural condition for the existence of constant mean curvature $H$ spheres in $\M^2(c)\times\R$. In fact, most results in the present paper can be understood as comparison results between the geometries of $M\times\R$ and the corresponding homogeneous case $\M^2(c)\times\R$.

Moreover, we will restrict ourselves to a capillarity problem, i.e., when $\Sigma$ has constant angle function $\nu=\nu_0$ for some $-1<\nu_0\leq0$ along its boundary. Note that the angle function of $\Sigma$ is defined as $\nu=\langle N,E_3\rangle$, where $N$ is the unit normal vector field to $\Sigma$ for which the mean curvature of $\Sigma$ is $H$, and $E_3=\partial_t$ is the vertical Killing vector field. This problem is a classical example of overdetermined situation so it is not expected that many surfaces satisfy those conditions for a given domain $\Omega\subset M$. Although no capillary graphs for $\nu_0\neq0$ are known in $M\times\R$ different from those invariant under a $1$-parameter group of isometries, it turns out that compact $H$-surfaces in $M\times\R$ lie in this case (for $\nu_0=0$) as a consequence of Alexandrov reflection principle~\cite{Alex}. They are contained in the more general case of embedded $H$-bigraphs, i.e., (not necessarily compact) connected embedded $H$-surfaces which are made up of two graphs, symmetric with respect to some slice $M\times\{t_0\}$. 

We will now list some examples of this kind of surfaces. Ritoré \cite{rit1} and Große-Brauckmann \cite{gb1} constructed certain families of non-compact $H$-bigraphs in $\R^3$. In the more general case of $\M^2(c)\times\R$, there are rotationally invariant $H$-spheres and $H$-cylinders invariant under a $1$-parameter group of isometries around a geodesic which are $H$-bigraphs for $4H^2+c>0$ (see Section~\ref{sec_ejemplos}). In fact, the results proved in this paper can be applied to some not necessarily embedded $H$-bigraphs which are periodic with a compact fundamental piece, as in the horizontal unduloids in $\M^2(c)\times\R$ constructed by the author and Torralbo in~\cite{MT}. Finally, in a general product manifold $M\times\R$, for $H$ large enough, embedded constant mean curvature spheres $H$ in $M\times\R$ exist as solutions of the isoperimetric problem as well as certain perturbations of tubular neighborhoods around horizontal geodesics, see Mazzeo and Pacard~\cite{MazPac}. 

Most results in this paper will concern estimates for the geodesic curvature (in $M\times\{0\}$) of the boundary of an $H$-bigraph $\Sigma\subset M\times\R$ depending on the height that $\Sigma$ reaches (the height function $h\in C^\infty(\Sigma)$ is given by $h(p,t)=t$). Ros and Rosenberg proved in~\cite[Theorem 8]{ror1} that any properly embedded $H$-bigraph in $\R^3\equiv\R^2\times\R$ over a domain $\Omega\subset\R^2$ with height function such that $|h|\leq\frac{1}{H}$, satisfies that the components of $\R^2-\Omega$ are strictly convex. We generalize this result to $\M^2(c)\times\R$ and improve the estimate on the geodesic curvature when the maximum height of the surface is assumed to be small enough.

Observe that, as $H$-graphs are stable, the condition $4H^2+c>0$ makes possible to apply Theorem 2.8 in \cite{mpr19} to conclude that the distance function $d(p,\partial\Sigma)$, $p\in\Sigma$, is bounded, so the height function is also bounded (in the case $4H^2+c\leq0$, this property fails to be true as invariant examples in $\H^2\times\R$ given in \cite{onnis1} and \cite{earp1} show). Aledo, Espinar and Gálvez proved in \cite{aleg1} that if $\Sigma\subseteq M\times\R$ is an $H$-graph over a compact open domain that extends to its boundary with $h=0$ and $\nu=\nu_0$ in $\partial\Omega$, and $c=\inf\{K_M(p):p\in\Omega\}>-4H^2$, then $\Sigma$ can reach at most height
\begin{equation}\label{altura_maxima}
\alpha(c,H,\nu_0)=\begin{cases}
\frac{4H}{\sqrt{-4cH^2-c^2}}\left(\arctan\left(\frac{\sqrt{-c}}{\sqrt{c+4H^2}}\right)+\arctan\left(\frac{\nu_0\sqrt{-c}}{\sqrt{c+4H^2}}\right)\right)&\text{if } c<0,\\
\frac{1+\nu_0}H&\text{if }c=0,\\
\frac{4H}{\sqrt{4cH^2+c^2}}\left(\arctanh\left(\frac{\sqrt{c}}{\sqrt{c+4H^2}}\right)+\arctanh\left(\frac{\nu_0\sqrt{c}}{\sqrt{c+4H^2}}\right)\right)&\text{if } c>0.
\end{cases}
\end{equation}
Although they only considered the case $\nu_0=0$, their argument can be directly generalized. This bound turns out to be the best one in terms of $(c,H,\nu_0)$ in the sense that  the only such $H$-graphs in $\M^2(c)\times\R$ for which equality holds are spherical caps of rotationally invariant spheres meeting $\M^2(c)\times\{0\}$ with constant angle $\nu_0$.

Theorems~\ref{teor:acotacion_kg} and~\ref{thm:acotacion-kg-adicional} will state the following results (for $\nu_0=0$) in the case the regular domain $\Omega\subset M$ is compact (see also Remarks~\ref{rmk:acotacion_kg_general} and~\ref{rmk:acotacion-kg-adicional} for arbitrary $-1<\nu_0\leq 0$):
\begin{itemize}
\item The geodesic curvature $\kappa_g$ of $\partial\Omega$ in $M$, with respect to the outer conormal vector field, satisfies the lower bound
\[\kappa_g\geq\frac{-4H^2+c(1-\nu_0^2)}{4H\sqrt{1-\nu_0^2}},\]
and, when $M=\M^2(c)$, equality holds only for rotationally invariant spheres.
\item If we additionally suppose that $|h|\leq m\cdot\alpha(c.H,\nu_0)$ for some constant $0<m\leq\frac{1}{2}$, then the previous bound is improved to the following one:
\[\kappa_g\geq\frac{(4-8m)H^2+c(1-\nu_0^2)}{4mH\sqrt{1-\nu_0^2}}.\]
\end{itemize}
In Theorem~\ref{teor_homogeneos}, we will drop the compactness hypothesis in the second item above when we restrict to $M=\M^2(c)$ and $\nu_0=0$. In this case, equality holds if and only if $m=\frac{1}{2}$ and $\Sigma$ is an $H$-cylinder invariant under a $1$-parameter group of horizontal isometries (these examples are described in section \ref{sec_ejemplos}). Observe that $m$ represents the fraction of the maximum height that $\Sigma$ is allowed to reach; it is remarkable that the maximum height of the invariant horizontal  $H$-cylinder is exactly one half of the maximum height of the corresponding $H$-sphere in $\M^2(c)\times\R$, which makes the value $m=\frac{1}{2}$ special. Hence, we extend the results by Ros and Rosenberg in~\cite{ror1}, where the case $M=\R^2$ and $m=\frac{1}{2}$ is treated.

Finally, in Section~\ref{sec:length-estimates} we will give another application of the same techniques to obtain a sharp lower bound for the distance from a point in $\Sigma$ to $\partial\Sigma$. Let us highlight that, in this last section, no capillarity condition or height restriction is assumed.

The author would like to thank Joaquín Pérez, Magdalena Rodríguez and Francisco Torralbo for some helpful conversations.

\section{Invariant surfaces in $\H^2\times\R$ and $\Sp^2\times\R$}\label{sec_ejemplos}

\noindent In this section, we will study surfaces that are invariant by $1$-parameter groups of isometries in $\M^2(c)\times\R$ which act trivially on the vertical lines. In fact, among these, we are interested in surfaces which are $H$-bigraphs (i.e. embedded $H$-surfaces symmetric with respect to a horizontal slice), for $H>0$ and $4H^2+c>0$. Thus, these groups of isometries can be identified with $1$-parameter groups of isometries of the base $\M^2(c)$.

In $\H^2$, there exist three different types of $1$-parameter groups of isometries, namely, rotations around a point, parabolic translations (i.e. rotations about a point at infinity) and hyperbolic translations. The family of rotationally invariant $H$-surfaces in $\H^2\times\R$ was studied by Hsiang and Hsiang \cite{hshs1} and those invariant by the other two families (including screw motion) were also studied by Sa Earp \cite{earp1} but it was Onnis \cite{onnis1} who gave a full classification of all invariant $H$-surfaces in $\H^2\times\R$. The case of $\Sp^2$ is quite different, because the only $1$-parameter groups of isometries of $\Sp^2$ are the rotations around a certain point and, up to conjugation, this point can be supposed to be the north pole. Such rotationally invariant $H$-surfaces were classified by Pedrosa \cite{ped1}. 

Finally, the only $1$-parameter groups of isometries of $\R^2$ are rotations around a point and translations; the former give rise in $\R^3=\R^2\times\R$ to Euclidean spheres of radius $\frac{1}{H}$, the latter to horizontal cylinders of radius $\frac{1}{2H}$.

For the sake of completeness, we will now derive the parametrizations and formulas that we will need in each of these situations. We will begin with rotations in both $\H^2\times\R$ and $\Sp^2\times\R$ and then proceed to parabolic and hyperbolic translations in $\H^2\times\R$. Let us recall that, up to a homothety, we can suppose $c\in\{-1,0,1\}$ and, in the cases $c=1$ and $c=0$, the condition $4H^2+c>0$ is meaningless (as $H>0$) but, for $c=-1$, it implies that $H>\frac{1}{2}$.

\subsection{Rotationally invariant surfaces in $\H^2\times\R$ and $\Sp^2\times\R$}\label{sec_ejemplos_rotacionales}
To start with, let us consider the model $\H^2\times\R=\{(x,y,z,t)\in\R^4:x^2+y^2-z^2=-1,z>0\}$ endowed with the metric $dx^2+dy^2-dz^2+dt^2$. It was shown by Hsiang and Hsiang that, for any $H>\frac{1}{2}$, the only rotationally invariant $H$-bigraphs are the rotationally invariant CMC spheres. If we suppose the axis of rotation to be $\{(0,0,1)\}\times\R$, the upper half of such a sphere is parametrized by $X(r,u)=\left(\sinh r\cos u,\sinh r\sin u,\cosh r,h(r)\right)$, 
where $u\in\R$ and
\[h(r)=\frac{4H}{\sqrt{4H^2-1}}\arcsin\sqrt{\frac{1-(4H^2-1)\sinh^2\frac{r}{2}}{4H^2}},\quad r\in\left[0,2\arcsinh\frac{1}{\sqrt{4H^2-1}}\right]\]
(see figure \ref{fig:H2xR} where some examples have been depicted). 
\begin{figure}
\centering\includegraphics[height=6cm]{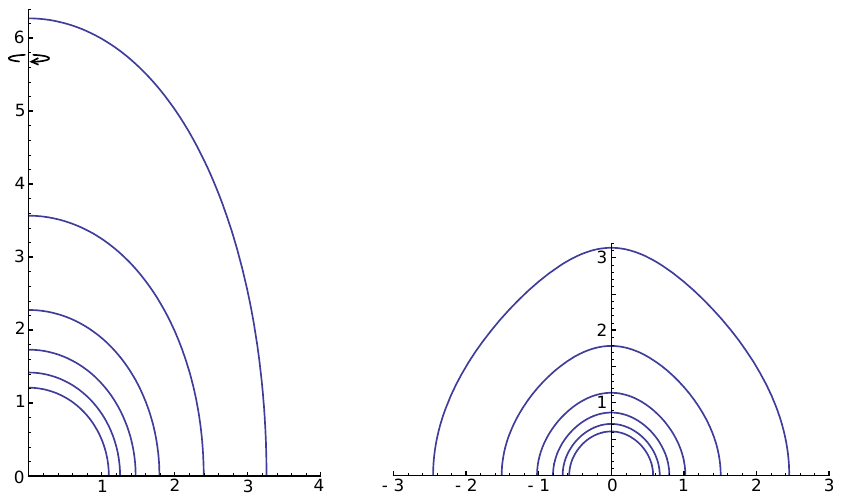}
\caption{On the left, rotationally invariant CMC spheres (the horizontal axis represents the intrinsic length in $\H^2$ and the vertical one is the real line) and, on the right,  CMC cylinders invariant under hyperbolic translations, where we see their intersection with the plane $y=1$ in the halfspace model. In both cases, the represented values of $H$ are 0.54, 0.6, 0.7, 0.8, 0.9 and 1.}\label{fig:H2xR}
\end{figure}

On the other hand, we will consider the standard model of $\Sp^2\times\R$ as a submanifold of $\R^4$, given by $\Sp^2\times\R=\{(x,y,z,t)\in\R^4:x^2+y^2+z^2=1\}$ with the induced Riemannian metric. It is well-known that every $1$-parameter group of ambient isometries consists only of rotations so, up to an isometry, they may be supposed to be rotations around the axis $\{(0,0,1,t):t\in\R\}$. Hence, the orbit space can be identified with the totally geodesic surface $\{(x,y,z,t)\in\Sp^2\times\R:x=0\}\cong\Sp^1\times\R$, and we will take the generating curve as
\[\gamma(t)=\left(0,\sin r(t),\cos r(t),h(t)\right)\]
for some functions $r,h$ defined on some interval of the real line. Pedrosa \cite{ped1} showed that the generated surface has constant mean curvature $H\in\R$ if and only if certain ODE system is satisfied. In fact, he proved that in the intervals where $r$ is invertible, we can take it as the parameter and the corresponding ODE system becomes
\begin{equation}\label{eqn:sistema_S2xR}
\left\{\begin{array}{l}
h'(r)=\cot(\sigma(r)),\\
\sigma'(r)=\frac{2H+\cot(r)\cos(\sigma(r))}{\sen(\sigma(r))},
\end{array}\right.
\end{equation}
for an auxiliary function $\sigma$. The second equation can be easily solved as it only depends on $r$ and $\sigma$ and we obtain
\[\sigma(r)=\arccos(2H(c_0+\cos r)\csc r)\]
for some $c_0\in\R$, where $r\in[a(c_0),b(c_0)]\subseteq[-\pi,\pi]$ is the maximal interval in which $\sigma$ is defined. By plugging this expression into the first equation in (\ref{eqn:sistema_S2xR}), we arrive to 
\begin{equation}\label{eqn:S2xR_h}
 h(r)=\int_{a(c_0)}^r\frac{2H(c_0+\cos s)\csc s}{\sqrt{1-4H^2(c_0+\cos s)^2\csc^2 s}}\dt s.
\end{equation}
The only two cases which lead to $H$-bigraphs are the following:
\begin{itemize}
 \item For $c_0=-1$, rotationally invariant spheres are obtained. More explicitly, 
\[h(r)=\frac{4H}{\sqrt{1+4H^2}}\arccosh\left(\frac{\sqrt{1+4H^2}}{2H}\cos\frac{r}{2}\right)\]
where $r$ lies in the interval $[-2\arctan\frac{1}{2H},2\arctan\frac{1}{2H}]$. Thus, the maximum height is attained for $r=0$ and that sphere is a bigraph over a domain whose boundary has constant geodesic curvature in $\Sp^2$ with respect to the outer conormal vector field, equal to $-H+\frac{1}{4H}$.
\item For $c_0=0$, we obtain rotationally invariant tori instead. In this case,
\[h(r)=\frac{2H}{\sqrt{1+4H^2}}\arccosh\left(\frac{\sqrt{1+4H^2}}{2H}\sin r\right)\]
where $r\in[\frac{\pi}{2}-\arctan\frac{1}{2H},\frac{\pi}{2}+\arctan\frac{1}{2H}]$. The maximum height is attained when $r=\frac{\pi}{2}$ and the boundary of the domain over which the torus is a bigraph has two connected components which have constant geodesic curvature $\frac{1}{2H}$ in $\Sp^2\times\R$ (with respect to the outer conormal vector field).
\end{itemize}
These two families are represented in figure \ref{fig:S2xR}. We remark that the maximum height of a CMC torus is exactly a half of that of the corresponding sphere for the same mean curvature.

\begin{figure}
\centering\includegraphics[height=3.2cm]{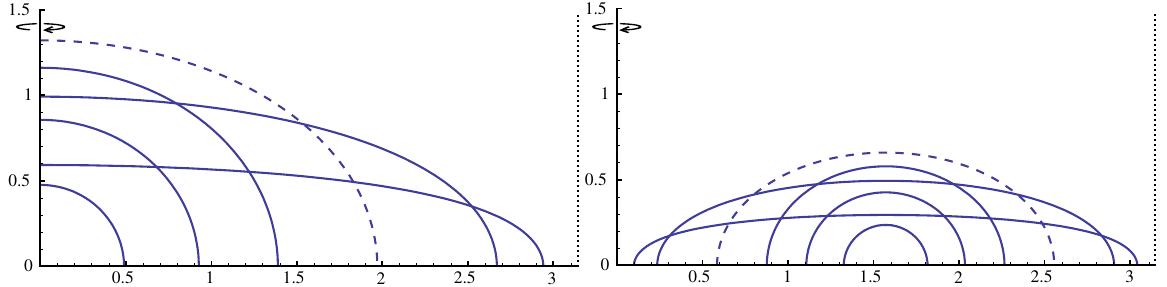}
\caption{On the left, rotationally invariant CMC spheres and, on the right, rotationally invariant CMC tori, in $\Sp^2\times\R$. In both cases, the represented values of $H$ are 0.05, 0.12, 0.331372, 0.6, 1 and 2. The horizontal axis measures the intrinsic distance in $\Sp^2$ while the vertical one is the real line. The maximum height is attained for $H\approx0.331372$, which is drawn as a dashed line.}\label{fig:S2xR}
\end{figure}

\subsection{Invariant surfaces under hyperbolic translations in $\H^2\times\R$}

In this section, we will work in the upper halfplane model $\H^2\times\R=\{(x,y,t)\in\R^3:y>0\}$ endowed with the metric $(dx^2+dy^2)/y^2+dt^2$. Up to conjugation by an ambient isometry, the $1$-parameter group of hyperbolic translations may be considered to be $\{\Phi^h_s\}_{s\in\R}$, where
\[\Phi_s^h:\H^2\times\R\rightarrow\H^2\times\R,\quad\quad\Phi_s^h(x,y,t)=(xe^s,ye^s,t).\]
First of all, we observe that, as the orbit of any point is the horizontal Euclidean straight line which joins the point to a point in the axis $x=y=0$, we can consider the plane $y=1$ as the orbit space of this group of transformations.

Let us take a curve $\gamma(t)=(x(t),1,h(t))$ for some $C^2$ functions $x,h$ defined in some interval of the real line. Thus, a surface invariant by $\{\Phi^h_s\}_{s\in\R}$ can be parametrized as
\begin{equation}\label{eqn:param_hiperbolico}
X(u,t)=\left(x(t)e^u,e^u,h(t)\right).
\end{equation}
It is a straightforward computation to check that the mean curvature of this parametrization is given by
\begin{equation}\label{eqn:H_hiperbolico}
H=\frac{-xh'((1+x^2)(h')^2+2(x')^2)+(1+x^2)(h'x''-x'h'')}{2((1+x^2)(h')^2+(x')^2)^{3/2}}.
\end{equation}
In order to simplify this equation, we will reparametrize the curve $\gamma$ in such a way the denominator simplifies. Observe that we can suppose that $(h')^2+(x')^2/(1+x^2)=1$ so there exists a $C^1$ function $\alpha$ such that $h'=\cos\alpha$ and $x'=\sqrt{1+x^2}\sin\alpha$. Now, we can obtain expressions for $x''$ and $h''$ just by taking derivatives in these identities. If we substitute the results in equation (\ref{eqn:H_hiperbolico}), we get
\[H=\frac{\sqrt{1+x^2}\alpha '-x\cos\alpha}{2 \sqrt{1+x^2}}.\]
The proof of the following lemma is now trivial.

\begin{lema}
The parametrized surface defined in (\ref{eqn:param_hiperbolico}) has constant mean curvature $H\in\R$ if and only if the functions $(x,h,\alpha)$ satisfy the following ODE system
\begin{equation}\label{eqn:sistema_hiperbolico}
\left\{\begin{array}{l}
h'=\cos\alpha\\
x'=\sqrt{1+x^2}\sin\alpha\\
\alpha'=2H+\frac{x\cos\alpha}{\sqrt{1+x^2}}
\end{array}\right.
\end{equation}
Furthermore, the energy function $E=-2Hx-\sqrt{1+x^2}\cos\alpha$ is constant along any solution.
\end{lema}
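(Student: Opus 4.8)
The plan is to observe that both halves of the lemma are immediate consequences of the simplification already performed just above the statement, so the only work is bookkeeping.

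For the equivalence, I would point out that the first two equations of (\ref{eqn:sistema_hiperbolico}) are nothing but the reparametrization identities $h'=\cos\alpha$ and $x'=\sqrt{1+x^2}\sin\alpha$ that were used to introduce the angle function $\alpha$; these hold automatically for the curve $\gamma$ once the normalization $(h')^2+(x')^2/(1+x^2)=1$ is imposed. With them in force, the simplified mean curvature expression $H=(\sqrt{1+x^2}\,\alpha'-x\cos\alpha)/(2\sqrt{1+x^2})$ obtained just before the lemma says exactly that $H$ is constant along $\gamma$ if and only if $2H=\alpha'-x\cos\alpha/\sqrt{1+x^2}$, i.e. if and only if the third equation $\alpha'=2H+x\cos\alpha/\sqrt{1+x^2}$ is satisfied. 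Thus the parametrized surface has constant mean curvature $H$ precisely when the full system holds.

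For the conservation of energy, I would simply differentiate $E=-2Hx-\sqrt{1+x^2}\cos\alpha$ with respect to $t$ and substitute the system. Differentiating yields
\[E'=-2Hx'-\frac{xx'}{\sqrt{1+x^2}}\cos\alpha+\sqrt{1+x^2}\,\alpha'\sin\alpha.\]
Plugging in $x'=\sqrt{1+x^2}\sin\alpha$, the term $-xx'\cos\alpha/\sqrt{1+x^2}$ becomes $-x\sin\alpha\cos\alpha$; plugging $\alpha'=2H+x\cos\alpha/\sqrt{1+x^2}$ into $\sqrt{1+x^2}\,\alpha'\sin\alpha$ produces $2H\sqrt{1+x^2}\sin\alpha+x\sin\alpha\cos\alpha$; and $-2Hx'=-2H\sqrt{1+x^2}\sin\alpha$. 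The two terms $\pm x\sin\alpha\cos\alpha$ cancel, and so do $\pm 2H\sqrt{1+x^2}\sin\alpha$, leaving $E'\equiv0$. Hence $E$ is constant along any solution.

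As the author notes, there is no genuine obstacle here: everything reduces to the algebraic identity encoded in the simplified mean curvature formula, and the only point deserving a little care is verifying that all the cross terms $x\sin\alpha\cos\alpha$ cancel in the computation of $E'$.
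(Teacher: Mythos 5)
Your proposal is correct and follows essentially the same route as the paper, which declares the lemma ``now trivial'' precisely because the equivalence is encoded in the simplified expression $H=\bigl(\sqrt{1+x^2}\,\alpha'-x\cos\alpha\bigr)/\bigl(2\sqrt{1+x^2}\bigr)$ derived just before the statement. Your explicit verification that $E'=0$, with the cancellation of the $x\sin\alpha\cos\alpha$ and $2H\sqrt{1+x^2}\sin\alpha$ terms, is exactly the computation the paper leaves to the reader.
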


We will restrict ourselves to the case $H>1/2$. Plugging the expression of the energy into the second equation in (\ref{eqn:sistema_hiperbolico}), it is not difficult to conclude that $x$ verifies the equation
\[
(x')^2=(1-E^2)-4HEx+(1-4H^2)x^2.
\]
As $H>\frac{1}{2}$, the RHS has two different real roots as a polynomial in $x$ and, if we factor it, the equation can be expressed, up to a sign, as
\[\frac{x'}{\sqrt{(4H^2+E^2-1)-\left((4H^2-1)x-2HE\right)^2}}=\frac{\pm 1}{\sqrt{4H^2-1}},\]
from where it is easy to deduce that there exists $c_0\in\R$ such that
\begin{equation}\label{eqn:x_hiperbolico}
x(t)=\frac{2HE}{4H^2-1}+\frac{\sqrt{4 H^2+E^2-1}}{4 H^2-1}\sin
   \left(\pm t\sqrt{4H^2-1}+c_0\right).
\end{equation}
After a translation and a reflection in the parameter $t$, we will suppose without loss of generality that $c_0=0$ and the $\pm$ sign is positive. Now, we can integrate $h$ by taking into account the identity $h'=\cos\alpha=(E+2H)/\sqrt{1+x^2}$, and we get
\begin{equation}\label{eqn:h_hiperbolico}
h(t)=h(0)+\int_0^t\frac{(8 H^2-1)E+2H\sqrt{4H^2+E^2-1} \sin\left(s\sqrt{4H^2-1} \right)}{\sqrt{\left(1-4H^2\right)^2+\left(\sqrt{4H^2+E^2-1} \sin(s\sqrt{4H^2-1})+2HE\right)^2}}\dt s.
\end{equation}
Finally, we are able to characterize the surfaces we were looking for. Some pictures of them are drawn in Figure \ref{fig:H2xR}.

\begin{prop}\label{unicidad_cilindro_H2xR}
Let $(x,h,\alpha)$ be a solution of (\ref{eqn:sistema_hiperbolico}) with energy $E\in\R$ for some $H>\frac{1}{2}$. Then, the generated invariant surface can be extended to an $H$-bigraph if and only if $E=0$. In this case, the generating curve can be reparametrized, up to an ambient isometry, as
\[
\left.\begin{array}{l}
x(r)=\displaystyle\frac{1}{\sqrt{4H^2-1}}\sin r\\
h(r)=\displaystyle\frac{2H}{\sqrt{4 H^2-1}}\arctan\frac{\cos r}{\sqrt{4H^2-1+\sin^2r}}
\end{array}\right\},\quad r\in\R.\]
\end{prop}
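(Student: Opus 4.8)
My plan is to translate the bigraph condition into a single scalar equation and then show that equation is equivalent to $E=0$. The starting observation is that, by the third equation of \eqref{eqn:sistema_hiperbolico}, $\alpha'=2H+\frac{x\cos\alpha}{\sqrt{1+x^{2}}}\geq 2H-1>0$, so $\alpha$ is strictly increasing along every solution and may be used as a global parameter. Its geometry is controlled through the energy relation $\sqrt{1+x^{2}}\cos\alpha=-2Hx-E$, which also gives the clean expression $\alpha'=\frac{2H-Ex}{1+x^{2}}$. The function $x$ oscillates between its two turning values $x_{\min}<x_{\max}$ (the roots of the polynomial governing $(x')^{2}$), at which $x'=\sqrt{1+x^{2}}\sin\alpha=0$, hence $\sin\alpha=0$ and the angle function $\nu$ of the surface vanishes; there the surface meets the corresponding slice orthogonally, which is exactly what is needed to glue a graph to its mirror image. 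Because $x(t)$ is symmetric about the instant at which it attains $x_{\max}$ and $h'=\cos\alpha$ depends on $t$ only through $x$, the profile $(x,h)$ is automatically symmetric with respect to the horizontal slice through that point, and likewise through the point where $x=x_{\min}$. Composing the two reflections, the generating curve is invariant under a vertical translation of amplitude $2\Delta$, where $\Delta$ is the height gained along one monotone arc running from $x_{\min}$ to $x_{\max}$; therefore the (maximal) invariant surface is bounded in the $\R$-factor, i.e. it extends to an $H$-bigraph, if and only if $\Delta=0$.

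The heart of the matter is to prove that $\Delta=0$ forces $E=0$. Parametrizing the monotone arc by $\alpha$, which runs from $0$ to $\pi$, and splitting at the maximal-height value $\alpha=\pi/2$, I would write
\[\Delta=\int_{0}^{\pi/2}\cos\alpha\,\bigl(\phi(x_{1})-\phi(x_{2})\bigr)\df\alpha,\qquad \phi(x)=\frac{1}{\alpha'}=\frac{1+x^{2}}{2H-Ex},\]
where $x_{1}=x(\alpha)$ and $x_{2}=x(\pi-\alpha)$ are the points of the arc with complementary angle, so that $x_{1}<x_{2}$ and $2H-Ex_{i}>0$. The relation $\cos(\pi-\alpha)=-\cos\alpha$ combined with the energy relation couples the two points through $\frac{2Hx_{1}+E}{\sqrt{1+x_{1}^{2}}}+\frac{2Hx_{2}+E}{\sqrt{1+x_{2}^{2}}}=0$, which after squaring becomes $(4H^{2}-E^{2})(x_{1}+x_{2})+4HE(1-x_{1}x_{2})=0$. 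A direct computation gives $\phi(x_{1})-\phi(x_{2})=\frac{(x_{1}-x_{2})\,W}{(2H-Ex_{1})(2H-Ex_{2})}$ with $W=2H(x_{1}+x_{2})+E(1-x_{1}x_{2})$, and the coupling shows that $W$ can vanish only where $x_{1}+x_{2}=0$; but $x_{1}+x_{2}=0$ would give $1-x_{1}x_{2}=1+x_{1}^{2}>0$, incompatible with the coupling when $E\neq0$. Hence $W$ never vanishes on the arc, so by continuity it keeps a constant sign, which one reads off at $\alpha\to\pi/2$ (where $x_{1}=x_{2}=-\frac{E}{2H}$ and $W=-E\frac{4H^{2}+E^{2}}{4H^{2}}$) to be $-\operatorname{sign}(E)$. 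Since $x_{1}<x_{2}$ and $\cos\alpha>0$, the integrand has the constant sign of $E$, so $\Delta$ has the sign of $E$ and vanishes precisely when $E=0$. I expect this sign computation to be the main obstacle: the ascending and descending halves of the arc almost cancel, and only the algebraic coupling between $x_{1}$ and $x_{2}$ exhibits the difference as sign-definite.

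Finally, for $E=0$ the system integrates explicitly. Then $\alpha'=\frac{2H}{1+x^{2}}$ and $x(t)=\frac{1}{\sqrt{4H^{2}-1}}\sin\bigl(\sqrt{4H^{2}-1}\,t\bigr)$, so after the ambient reparametrization $r=\sqrt{4H^{2}-1}\,t$ one obtains the stated $x(r)$, generating the embedded invariant cylinder. For the height, $h'=\cos\alpha=\frac{-2Hx}{\sqrt{1+x^{2}}}$ together with the identity $4H^{2}-1+\sin^{2}r=4H^{2}-\cos^{2}r$ and the substitution $w=\cos r$ gives, up to an additive constant,
\[h(r)=\frac{2H}{\sqrt{4H^{2}-1}}\arcsin\frac{\cos r}{2H}=\frac{2H}{\sqrt{4H^{2}-1}}\arctan\frac{\cos r}{\sqrt{4H^{2}-1+\sin^{2}r}},\]
the last equality being the elementary relation $\arcsin\frac{\cos r}{2H}=\arctan\frac{\cos r}{\sqrt{4H^{2}-1+\sin^{2}r}}$. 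Placing the slice of symmetry at height $0$ fixes the constant (and the reflection $h\mapsto-h$ is an ambient isometry), yielding exactly the asserted parametrization and completing the proof.
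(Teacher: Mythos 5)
Your proof is correct, and the crux of it takes a genuinely different route from the paper's. The paper first integrates the system in closed form, obtaining (\ref{eqn:x_hiperbolico}) and (\ref{eqn:h_hiperbolico}), and then splits $h=h_1+h_2$ according to the two terms in the numerator of the integrand: $h_1'$ has the constant sign of $(8H^2-1)E$, while $h_2'$ is odd and periodic, so its integral over the symmetric graph interval $|t|\le\pi/(2\sqrt{4H^2-1})$ cancels; hence the height difference between the two vertical-tangency points equals that of the monotone $h_1$, which vanishes iff $E=0$. You avoid the explicit formula for $h$ entirely: the energy relation gives $\alpha'=(2H-Ex)/(1+x^2)>0$, legitimizing $\alpha$ as a global parameter, and your reflection pairing $\alpha\leftrightarrow\pi-\alpha$ together with the coupling $(4H^2-E^2)(x_1+x_2)+4HE(1-x_1x_2)=0$ does the work that the odd-periodic cancellation does in the paper. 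I checked the algebra: the coupling yields $W=\frac{4H^2+E^2}{4H}(x_1+x_2)$, which confirms both your non-vanishing claim and the limit value $W\to-E\,\frac{4H^2+E^2}{4H^2}$ at $\alpha\to\pi/2$, and the sign bookkeeping (using $\cos\alpha>0$, $x_1<x_2$, $2H-Ex_i>0$) correctly gives $\operatorname{sign}(\Delta)=\operatorname{sign}(E)$. This is marginally stronger than the paper's conclusion (you identify the sign of the vertical period, not merely its non-vanishing) and uses only the first integral rather than the closed-form solution; the paper's argument is shorter given that the explicit formulas were derived anyway for the parametrization in the statement. Your reduction of the bigraph property to $\Delta=0$, via the symmetry of the profile across the slice through each turning point and the resulting vertical periodicity, is also more explicit than the paper's treatment; to make it airtight you should add one line in each direction: a surface invariant under a nontrivial vertical translation meets vertical lines infinitely often while a bigraph meets them at most twice, and for $E=0$ the closed generating curve is simple because $h'=\cos\alpha$ vanishes exactly once along the monotone arc, so the interior of the arc lies strictly above the turning slice and the explicit curve indeed generates an embedded bigraph. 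The final integration, including the identity $\arcsin\frac{\cos r}{2H}=\arctan\bigl(\cos r/\sqrt{4H^2-1+\sin^2 r}\bigr)$, is correct and reproduces the stated parametrization.
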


\begin{proof}
Let us split $h(t)=h_1(t)+h_2(t)$ in (\ref{eqn:h_hiperbolico}) by splitting the integrand in two additive terms which correspond to the two terms in its numerator. The first term does not vanish unless $E=0$ so $h_1$ is  monotonic and the second one is an odd periodic function in $s$ which vanishes at $s=k\pi/\sqrt{4H^2-1}$ for any $k\in\Z$. On the other hand, if the parametrization interval contains $t=0$, from (\ref{eqn:x_hiperbolico}) we deduce that $|t|\leq\pi/(2\sqrt{4H^2-1})$ so the surface is a graph and, furthermore, the points at which the normal vector field is horizontal must satisfy $x'=0$, so the parametrization interval must be $|t|\leq\pi/(2\sqrt{4H^2-1})$. Now, as the integral of $h_2'$ over $[-\pi/(2\sqrt{4H^2-1}),\pi/(2\sqrt{4H^2-1})]$ vanishes, we have
\[h\left(\frac{\pi}{2\sqrt{4H^2-1}}\right)-h\left(\frac{-\pi}{2\sqrt{4H^2-1}}\right)=h_1\left(\frac{\pi}{2\sqrt{4H^2-1}}\right)-h_1\left(\frac{-\pi}{2\sqrt{4H^2-1}}\right).\]
The RHS term vanishes if and only if $h_1$ identically vanishes as it is monotonic and $h_1$ vanishes if and only if $E=0$. The expressions given in the statement follow from a direct computation in (\ref{eqn:x_hiperbolico}) and (\ref{eqn:h_hiperbolico}) for $E=0$ and from the substitution $r=t\sqrt{4H^2-1}$. Observe that there is no restriction in taking $r\in\R$ because this parametrization generates the whole bigraph.
\end{proof}

In the parametrization given in the statement of Proposition \ref{unicidad_cilindro_H2xR}, observe that the maximum height is attained for $r=0$ and the surface is a bigraph over a domain whose boundary consists of two hypercycles which have constant geodesic curvature in $\H^2$ with respect to the outer conormal vector field, equal to $\frac{-1}{2H}$. Furthermore, the maximum height is exactly a half of that of the corresponding CMC sphere.

\subsection{Invariant surfaces under parabolic translations}
In this case, we will also consider the upper halfplane model for $\H^2$ so, up to conjugation by an ambient isometry, the $1$-parameter group of parabolic translations is $\{\Phi_s^p\}_{s\in\R}$, where 
\[\Phi_s^p:\H^2\times\R\rightarrow\H^2\times\R,\quad\quad\Phi_s^p(x,y,t)=(x+s,y,t).\]
Hence, the orbit of any point in $\H^2$ is a horizontal Euclidean line parallel to the plane $y=0$. Thus, the orbit space may be considered to be the Euclidean plane $x=0$ so the generating curve can be thought as $\gamma(t)=(0,y(t),h(t))$ and a surface invariant by $\{\Phi^p_s\}_{s\in\R}$ can be parametrized as
\[X(u,t)=\left(s,y(t),h(t)\right).\]
It is straightforward to check that the mean curvature of this parametrization is 
\begin{equation}\label{eqn:parabolic_H}
H=-\frac{y^2 \left(-h''y'+h'y''+y(h')^3\right)}{2\left(y^2(h')^2+(y')^2\right)^{3/2}}.
\end{equation}
Furthermore, there is no loss of generality in supposing that the curve $\gamma$ is parametrized by its arc-length, i.e. $1=\|\alpha'\|^2=(y')^2/y^2+(h')^2$. Hence, we can take an auxiliary function $\alpha$, determined by $y'=y\sin\alpha$, $h'=\cos\alpha$. Substituting these equalities in (\ref{eqn:parabolic_H}), it simplifies to the following ODE system
\begin{equation}\label{eqn:sistema_parabolico}
\left\{\begin{array}{l}
y'=y\sin\alpha,\\
h'=\cos\alpha,\\
\alpha'=-2H-\cos\alpha.
\end{array}\right.
\end{equation}
Observe that, if we assume an initial condition $\alpha(0)=\alpha_0\in[0,2\pi]$, the third equation has a unique solution. Let us focus in the case $H>\frac{1}{2}$ which is the most interesting for our purposes and allows us to integrate the function $\alpha$ as
\begin{equation}\label{eqn:alpha_parabolico}
\alpha(t)=2\arctan\left(\frac{(2 H+1)}{\sqrt{4 H^2-1}} \tan\left(\frac{1}{2} \sqrt{4 H^2-1}(t-c_0)\right)\right),
\end{equation}
for some $c_0\in\R$ depending on $\alpha_0$. We emphasize that this formula defines $\alpha:\R\rightarrow\R$ as a strictly increasing diffeomorphism by considering all the branches of the function $\arctan$ and extending it by continuity, so the uniqueness of solution guarantees that every solution is considered in (\ref{eqn:alpha_parabolico}). We will suppose, after a translation in the parameter $t$, that $c_0=0$. By plugging expression (\ref{eqn:alpha_parabolico}) into the first two equations in (\ref{eqn:sistema_parabolico}), we can integrate $h$ and $y$ to obtain
\begin{equation}\label{eqn:h_parabolico}
\begin{array}{l}
y(t)=c_1 \left(\cos \left(t\sqrt{4 H^2-1}\right)+2 H\right),\\
h(t)=\alpha(t)+2Ht+c_2,
\end{array}
\end{equation}
for some constants $c_1>0$ and $c_2\in\R$ which can be supposed to be $c_1=1$ (after a hyperbolic translation) and $c_2=0$ (after a vertical translation). 

\begin{prop}
There are no embedded bigraphs in $\mathbb{H}^2\times\R$ invariant under parabolic translations with constant mean curvature $H>\frac{1}{2}$.
\end{prop}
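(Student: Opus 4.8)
The plan is to argue by contradiction, extracting from the explicit solution a fixed, nonzero vertical drift per half-period and showing it is incompatible with the reflective symmetry that a bigraph must possess. First I would reduce everything to the generating curve $\gamma(t)=(0,y(t),h(t))$. If the invariant surface extends to an embedded bigraph symmetric with respect to a slice $\{t=t_0\}$, then it is invariant under the reflection $(x,y,t)\mapsto(x,y,2t_0-t)$, so $\gamma$ is symmetric about the horizontal line $\{t=t_0\}$ in the orbit space. The two graphs making up the bigraph are glued along the fold where the surface is vertical, i.e. where the tangent plane contains $E_3$ and hence $\nu=0$. For the parametrization $X(u,t)=(u,y(t),h(t))$ the tangent plane is spanned by $\partial_x$ and $y'\partial_y+h'\partial_t$, so the vertical points are exactly those with $y'(t)=0$ (there $h'=\cos\alpha=\pm1\neq0$). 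All such fold points must lie in the slice, and therefore at one common height $t_0$.

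Next I would locate the vertical points explicitly and identify the domain. Since the surface is parabolic-invariant, its projection to $\H^2$ is a horizontal strip; from (\ref{eqn:h_parabolico}) the function $y(t)=\cos(t\sqrt{4H^2-1})+2H$ oscillates between $2H-1$ and $2H+1$ and stays bounded away from $0$ and $\infty$, so the domain is forced to be the strip $\{2H-1\le y\le 2H+1\}$, and each of the two graphs is precisely the arch of the profile between two consecutive zeros of $y'$. These zeros occur at $t_k=k\pi/\sqrt{4H^2-1}$, $k\in\Z$, where $y=2H+(-1)^k$; between two consecutive ones $y$ is monotone, so the arch is a genuine graph over the strip with vertical tangents at its two ends, and its two endpoints are the two boundary folds of the purported bigraph.

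Finally, the heart of the matter is to compute the height gap between consecutive vertical points and to see that it never vanishes. Over a half-period $[t_k,t_{k+1}]$ the argument $\tfrac12\sqrt{4H^2-1}\,t$ advances by $\tfrac\pi2$, so by (\ref{eqn:alpha_parabolico}) the function $\alpha$ sweeps one full branch of $\arctan$ and increases by exactly $\pi$, while the linear term in (\ref{eqn:h_parabolico}) contributes $2H\pi/\sqrt{4H^2-1}$; hence $h(t_{k+1})-h(t_k)=\pi\bigl(1+\tfrac{2H}{\sqrt{4H^2-1}}\bigr)$, independently of $k$ and of the phase. This is a fixed nonzero constant, so the two boundary folds sit at different heights, contradicting that both lie on $\{t=t_0\}$. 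This is exactly the feature distinguishing the parabolic case from the hyperbolic one in Proposition \ref{unicidad_cilindro_H2xR}: there the drift was governed by the energy $E$ and could be killed by taking $E=0$, whereas here there is no free parameter and the drift can never be made to vanish. I expect the only delicate point to be the structural reduction in the first two steps — ruling out exotic domains reaching the ideal boundary of $\H^2$ and symmetry slices through the horizontal-tangent points of $\gamma$ — while the concluding computation is immediate once the set-up is in place.
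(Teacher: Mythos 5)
Your proof is correct and takes essentially the same route as the paper: the paper's own argument likewise locates the vertical points of the generating curve at $t_k=k\pi/\sqrt{4H^2-1}$ (the paper misprints this as $k\pi/\sqrt{4H^2+1}$, but your value is the right one) and concludes from (\ref{eqn:alpha_parabolico}) and (\ref{eqn:h_parabolico}) that $h(t_k)\neq h(t_{k+1})$, which is precisely your nonvanishing height drift. Your explicit gap $\pi\bigl(1+\tfrac{2H}{\sqrt{4H^2-1}}\bigr)$ per half-period, together with the structural remarks on why all fold points of a bigraph must lie in the symmetry slice, simply fleshes out what the paper dismisses as ``easy to check.''
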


\begin{proof}
Observe that such a graph must be given by a triple $(y,h,\alpha)$ satisfying (\ref{eqn:sistema_parabolico}) so (\ref{eqn:alpha_parabolico}) and (\ref{eqn:h_parabolico}) are also satisfied. The values of $t\in\R$ for which $y'=0$ are $t_k=k\pi/\sqrt{4H^2+1}$ for any $k\in\Z$ (these ones correspond to the points in the surface whose tangent plane is vertical). Now from (\ref{eqn:alpha_parabolico}) and (\ref{eqn:h_parabolico}) it is easy to check that $h(t_k)\neq h(t_{k+1})$ for every $k\in\Z$, which makes impossible the surface to be a bigraph. 
\end{proof}

\section{Boundary curvature estimates}\label{sec_general_estimate}

\noindent Let us suppose along this section that $\Sigma\subseteq M\times\R$ is a graph over a domain $\Omega\subseteq M$ with constant mean curvature $H>0$. Following the ideas given in \cite{aleg1}, for any given $c\in\R$ with $c+4H^2>0$, we will consider the function $g:[-1,1]\rightarrow\R$ determined by
\begin{equation}\label{eqn:g}
g'(t)=\frac{4H}{4H^2+c(1-t^2)},\quad\quad g(0)=0,
\end{equation}
which is strictly increasing and allows us to define the smooth function $\psi=h+g(\nu)\in C^\infty(\Sigma)$, where $h$ and $\nu$ are the height and angle functions, respectively. In the sequel, $K$ will denote the Gaussian curvature of $\Sigma$, $K_M$ the Gaussian curvature of $M$ extended to $M\times\R$ by making it constant along the vertical geodesics, and $A$ will be the shape operator of $\Sigma$. Note that Gauss equation reads $\det(A)=K-K_M\nu^2$.

As we are interested in applying the boundary maximum principle for the laplacian to $\psi$, we will need to work out $\Delta\psi$ (where the laplacian is computed in the surface $\Sigma$) and $\frac{\partial\psi}{\partial\eta}$, where $\eta$ is some outer conormal vector field to $\partial\Sigma$.  Next lemma will be useful.

\begin{lema}\label{lema_formulas}
In the previous situation, the following equalities hold.
\begin{itemize}
 \item[i)] $\nabla h=E_3^\top$,
 \item[ii)] $\Delta h=2H\nu$,
 \item[iii)] $\nabla\nu=-AE_3^\top$,
 \item[iv)] $\Delta\nu=\left(2K-4H^2-K_M(1+\nu^2)\right)\nu$.
\end{itemize}
\end{lema}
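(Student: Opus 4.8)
The plan is to derive all four identities from the single structural fact that the vertical Killing field $E_3$ is parallel in the product $M\times\R$ (that is, $\overline\nabla E_3=0$, where $\overline\nabla$ denotes the ambient connection), starting from the orthogonal decomposition $E_3=E_3^\top+\nu N$ along $\Sigma$. Identity i) is then immediate: $h$ is the restriction to $\Sigma$ of the ambient projection onto the $\R$-factor, whose ambient gradient is exactly $E_3$, so the intrinsic gradient $\nabla h$ is its tangential projection $E_3^\top$.

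For ii) and iii) I would differentiate $E_3=E_3^\top+\nu N$ along an arbitrary tangent vector $X$ and impose $\overline\nabla_X E_3=0$, using the Weingarten relation $\overline\nabla_X N=-AX$ and the Gauss formula $\overline\nabla_X E_3^\top=\nabla_X E_3^\top+\langle AX,E_3^\top\rangle N$. Splitting into tangential and normal parts yields simultaneously $\nabla_X E_3^\top=\nu\,AX$ from the tangential part and $X(\nu)=-\langle AX,E_3^\top\rangle$ from the normal part. The latter is exactly iii), namely $\nabla\nu=-AE_3^\top$, while tracing the former over an orthonormal frame $\{e_i\}$ gives $\Delta h=\sum_i\langle\nabla_{e_i}E_3^\top,e_i\rangle=\nu\operatorname{tr}A=2H\nu$, which is ii).

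The substantial part is iv). Here I would compute $\Delta\nu=\Div(\nabla\nu)=-\Div(AE_3^\top)$ and expand by the product rule as $-\sum_i\langle(\nabla_{e_i}A)E_3^\top,e_i\rangle-\sum_i\langle A(\nabla_{e_i}E_3^\top),e_i\rangle$. The second sum equals $-\nu\operatorname{tr}(A^2)=-\nu|A|^2$ by the relation $\nabla_{e_i}E_3^\top=\nu\,Ae_i$ just obtained. For the first sum I would use the symmetry of $\nabla_{e_i}A$ together with the contracted Codazzi equation: since $H$ is constant the term $\nabla(\operatorname{tr}A)$ drops out and only an ambient Ricci term survives, so that $\sum_i(\nabla_{e_i}A)e_i$ is the tangential part of $\overline{\Ric}(\,\cdot\,,N)$; writing $E_3^\top=E_3-\nu N$ and using that the product structure forces $\overline{\Ric}(E_3,\cdot)=0$, this identifies the first sum with $-\nu\,\overline{\Ric}(N,N)$. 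Consequently $\nu$ satisfies the Jacobi equation $\Delta\nu=-\bigl(|A|^2+\overline{\Ric}(N,N)\bigr)\nu$. Finally I would substitute the product-curvature value $\overline{\Ric}(N,N)=K_M(1-\nu^2)$ and the Gauss equation $\det A=K-K_M\nu^2$, which gives $|A|^2=(\operatorname{tr}A)^2-2\det A=4H^2-2K+2K_M\nu^2$; combining, $|A|^2+\overline{\Ric}(N,N)=4H^2-2K+K_M(1+\nu^2)$, whence iv) follows.

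I expect the main obstacle to be the sign bookkeeping in the Codazzi contraction and in isolating the ambient Ricci term, since the final coefficient $K_M(1+\nu^2)$ arises from a delicate cancellation between the $2K_M\nu^2$ coming from $|A|^2$ and the $K_M(1-\nu^2)$ coming from $\overline{\Ric}(N,N)$. To sidestep this entirely, one may instead observe that vertical translations preserve the CMC condition, so $\nu=\langle N,E_3\rangle$ is automatically a Jacobi field, reducing iv) to the two curvature substitutions above.
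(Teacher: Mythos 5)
Your proposal is correct, and its treatment of i)--iii) is essentially the paper's: the paper also gets iii) from the one-line Weingarten computation $\langle\nabla\nu,X\rangle=\langle\overline\nabla_XN,E_3\rangle=\langle X,-AE_3^\top\rangle$, and dismisses i)--ii) as standard (citing Rosenberg); your derivation of ii) by tracing $\nabla_XE_3^\top=\nu AX$ is just a written-out version of the same fact. The genuine difference is in iv). The paper's proof is a two-liner: since vertical translations are isometries of $M\times\R$, the function $\nu=\langle N,E_3\rangle$ is automatically a Jacobi function, so $\Delta\nu=-\left(|\sigma|^2+\Ric(N)\right)\nu$, and then it performs exactly your two substitutions, $|\sigma|^2=4H^2-2\det(A)$ with the Gauss equation $\det(A)=K-K_M\nu^2$, and $\Ric(N)=K_M(1-\nu^2)$. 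Your primary route instead proves the Jacobi property by hand, computing $\Delta\nu=-\Div(AE_3^\top)$ via the contracted Codazzi equation together with $\overline{\Ric}(E_3,\cdot)=0$ for the product metric; this is a self-contained verification of the very lemma the paper invokes, so it buys independence from the stability machinery at the cost of the sign bookkeeping you flag, and your closing shortcut is verbatim the paper's argument. One remark in your favor on those signs: your equation $\Delta\nu=-\left(|A|^2+\overline{\Ric}(N,N)\right)\nu$ is the internally consistent one, whereas the paper's intermediate line reads $\Delta\nu=\left(|\sigma|^2+\Ric(N)\right)\nu$ (stemming from its nonstandard display $L=\Delta-[|\sigma|^2+\Ric(N)]$), which is the negative of its own final formula; the end result iv) agrees with yours, so the discrepancy is a sign typo in the paper's intermediate step, not a flaw in your computation.
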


\begin{proof}
The identities for the gradient and the laplacian of $h$ are easy to check as $h$ is the restriction to $\Sigma$ of the height function in $M\times\R$ (see also \cite[Lemma 3.1]{rose2}). On the other hand, the gradient of $\nu=\langle N,E_3\rangle$ satisfies
\[\langle\nabla\nu,X\rangle=X(\langle N,E_3\rangle)=\langle\nabla_XN,E_3\rangle=\langle-AX,E_3\rangle=\langle X,-AE_3^\top\rangle\]
for any vector field $X$ on $\Sigma$, so $\nabla\nu=-AE_3^\top$. Finally, since the vertical translations are isometries of $M\times\R$, $\nu$ is a Jacobi function, i.e. $\nu$ lies in the kernel of the linearized mean curvature operator $L=\Delta+|A|^2+\Ric(N)$ on $\Sigma$ so we can compute its laplacian from $L\nu=0$ and obtain
\[\Delta\nu=-\left(|A|^2+\Ric(N)\right)\nu=\left(2K-4H^2-K_M(1+\nu^2)\right)\nu,\]
where we have used the Gauss equation and the well-known identities $|A|^2=4H^2-2\det(A)$ and $\Ric(N)=K_M(1-\nu^2)$.
\end{proof}

On the other hand, we need to obtain some suitable expression for the modulus of the Abresch-Rosenberg differential, in the case $M=\M^2(c)$. If we take a conformal parametrization $(U,z)$ in $\Sigma$, this quadratic differential can be written as
\[Q=(2Hp-ch_z^2)\dt z^2\]
(see \cite{fm2}), where $p\dt z^2=\langle-\nabla_{\p_z}N,\p_z\rangle\dt z^2$ is the Hopf differential and $h_z=\frac{\p h}{\p z}$. Although this expression depends on the parametrization, we may consider the function 
\begin{align}
q=\frac{4}{\lambda^2}|Q|^2&=\tfrac{4}{\lambda^2}\left(4H^2|p|^2+c^2|h_z|^4-2cH(ph_{\bar z}^2+\bar p h_z^2)\right)\notag\\
&=4H^2(H^2-\det(A))+\tfrac{c^2}{4}(1-\nu^2)^2-c(\|\nabla\nu\|^2-(2H^2-\det(A))(1-\nu^2)),\label{eqn:q}
\end{align}
where $\lambda$ is the conformal factor of the induced metric in $\Sigma$. Then, $q$ is well-defined and smooth on the whole $\Sigma$.

\begin{obse}
If $\Sigma$ is a constant mean curvature surface in $\M^2(c)\times\R$ whose Abresch-Rosenberg differential identically vanishes, then $\Sigma$ is (locally) invariant by a 1-parameter subgroup of isometries of $\M^2(c)\times\R$ which acts trivially on the vertical lines (see \cite[Lemma 6.1]{esr3}). Moreover, if $M$ is a Riemannian quotient of $\M^2(c)$ and $\Sigma\subset M\times\R$ is such that $Q=0$, then the lifted surface $\wt\Sigma\subset\M^2(c)\times\R$ also satisfies $Q=0$.
\end{obse}

Back to the computation of $\Delta\psi$ and taking into account the formulas in Lemma \ref{lema_formulas} and identity (\ref{eqn:q}), we get
\begin{equation}\label{eqn:laplaciano}
\Delta\psi=\Delta h+g'(\nu)\Delta\nu+g''(\nu)\|\nabla\nu\|^2=\frac{-8Hq\nu}{\left(4 H^2+c(1-\nu^2)\right)
^2}-\frac{4H\nu\left(1-\nu^2\right) (K_M-c)}{4 H^2+c(1-\nu^2)}.
\end{equation}

Finally, we are interested in working out $\frac{\p\psi}{\p\eta}$ along $\partial\Sigma$, where we have considered the outer conormal vector field to $\partial\Sigma$ in $\Sigma$ given by $\eta=-E_3^\top$ (it does not matter which outer conormal vector field is chosen as the only needed information is the sign of $\frac{\partial\psi}{\partial\eta}$). Hence, 
\begin{align*}
\frac{\p h}{\p\eta}&=\langle\nabla h,\eta\rangle=\langle E_3^\top,-E_3^\top\rangle=-\|E_3^\top\|^2,\\
\frac{\p\nu}{\p\eta}&=\langle\nabla\nu,\eta\rangle=\langle -AE_3^\top,-E_3^\top\rangle=\langle\overline\nabla_{E_3^\top}E_3^\top,N\rangle.
\end{align*}
However, if we parametrize $\partial\Sigma$ by $\gamma$ with $\|\gamma'\|=1$, then $\{E_3^\top/\|E_3^\top\|,\gamma'\}$ is an orthonormal basis of $T\Sigma$, and it is clear that
\[2H=\left\langle\frac{1}{\|E_3^\top\|^2}\overline\nabla_{E_3^\top}E_3^\top+\overline\nabla_{\gamma'}\gamma',N\right\rangle\]
so $\frac{\p\nu}{\p\eta}=2H\|E_3^\top\|^2+\|E_3^\top\|^3\kappa_g$. Here, $\kappa_g$ denotes the geodesic curvature of $\partial\Omega
=\partial\Sigma$ in the base $M$ with respect to $-\|E_3^\top\|^{-1}(N-\nu E_3)$, the outer conormal vector field to $\partial\Omega$ in $M$. Finally, we obtain
\begin{equation}\label{eqn:kg}
\frac{\p\psi}{\p\eta}=\frac{\p h}{\p\eta}+g'(\nu)\frac{\p\nu}{\p\eta}=\|E_3^\top\|^2\left(-1+g'(\nu)(2H+\|E_3^\top\|\kappa_g)\right).
\end{equation}

Note that any outer conormal vector field to $\partial\Omega$ in $M$ is a linear combination of $N$ and $E_3$, which is the key property to relate the geometries of $\Sigma$ and $M$.

Moreover, these computations allow us to give an optimal bound for the geodesic curvature of the boundary of the domain of a compact $H$-graph with a capillarity boundary condition. Theorem~\ref{teor:acotacion_kg} deals with the case the angle along the boundary is identically zero (see Remark~\ref{rmk:acotacion_kg_general} for the general case,).

In order to state the theorem in a more convenient way, observe that if $\Sigma\subseteq M\times\R$ is a compact embedded $H$-surface, then $\Sigma$ is a symmetric bigraph with respect to some slice $M\times\{t_0\}$  by applying the Alexandrov reflection principle~\cite{Alex} to vertical reflections (this slice  may be supposed to be $M\times\{0\}$after a vertical translation). Now it is obvious that $\Sigma$ intersects orthogonally such a slice, and it splits $\Sigma$ in two parts which the computations in this section can be applied to.

\begin{teor}\label{teor:acotacion_kg}
Let $\Sigma\subseteq M\times\R$ be a compact embedded $H$-surface with $H>0$. Then, it is an $H$-bigraph over a compact regular domain $\Omega\subseteq M$ with  zero values in $\partial\Omega$ after a vertical translation. Let us suppose that $c=\inf\{K_M(p):p\in\Omega\}$ satisfies $4H^2+c>0$. Then,
\begin{equation}\label{eqn:geodesic-general}
\kappa_g\geq -4H+\frac{c}{4H},
\end{equation}
where $\kappa_g$ is the geodesic curvature of $\partial\Omega$ in $M$ with respect to the outer conormal vector field.

Furthermore, if there exists $p\in\partial\Omega$ such that equality holds in \eqref{eqn:geodesic-general}, then $\Omega$ has constant curvature and $\Sigma$ has zero Abresch-Rosenberg differential.
\end{teor}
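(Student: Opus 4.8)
The plan is to apply the Hopf boundary maximum principle to the auxiliary function $\psi=h+g(\nu)$ introduced above, feeding on the two computations \eqref{eqn:laplaciano} and \eqref{eqn:kg}. First I would record the sign of the angle function: since $\Sigma$ is a graph over $\Omega$, its tangent planes are nowhere vertical, so $\nu$ never vanishes in the interior and therefore keeps a constant sign there. Because the height function is nonconstant and vanishes on $\partial\Omega$, it attains a nonzero interior extremum; evaluating $\Delta h=2H\nu$ at such an extremum (with $H>0$) and matching with the boundary value $\nu=\nu_0\le 0$ forces $\nu\le 0$ on all of $\Sigma$, with $\nu<0$ in the interior.

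Next I would check that $\psi$ is subharmonic. In \eqref{eqn:laplaciano} the common factor is $-\nu\ge 0$; the denominator $4H^2+c(1-\nu^2)$ stays positive (it is at least $4H^2+c>0$ when $c<0$ and at least $4H^2>0$ when $c\ge 0$); and $K_M-c\ge 0$ by the very definition of $c$ as an infimum. The remaining ingredient is $q\ge 0$: when $M=\M^2(c)$ this is immediate from $q=\tfrac{4}{\lambda^2}|Q|^2$, and for a general base one writes $E_3^\top$ in a principal frame and checks that the right-hand side of \eqref{eqn:q} becomes a quadratic polynomial in the difference of principal curvatures whose discriminant equals $-c^2(1-\nu^2)^2H^2\sin^2 2\theta\le 0$, so that $q\ge 0$. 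Hence $\Delta\psi\ge 0$. Since $h=0$ and $\nu=\nu_0$ on $\partial\Omega$, the function $\psi$ equals the constant $g(\nu_0)$ along $\partial\Sigma$, so the maximum principle gives $\psi\le g(\nu_0)$ on $\Sigma$ and every boundary point is a maximum. The Hopf lemma then yields $\frac{\partial\psi}{\partial\eta}\ge 0$ for the outer conormal $\eta=-E_3^\top$. Substituting this into \eqref{eqn:kg}, dividing by $\|E_3^\top\|^2=1-\nu_0^2>0$ and using $g'(\nu_0)=\frac{4H}{4H^2+c(1-\nu_0^2)}$, a direct rearrangement produces exactly the bound \eqref{eqn:geodesic-general}.

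For the rigidity statement I would argue through the equality case of the Hopf lemma. If equality holds in \eqref{eqn:geodesic-general} at some $p\in\partial\Omega$, then $\frac{\partial\psi}{\partial\eta}(p)=0$ at a boundary maximum, which by the strong form of the Hopf lemma (together with the strong maximum principle over the connected surface) forces $\psi\equiv g(\nu_0)$, hence $\Delta\psi\equiv 0$. Since $\nu<0$ in the interior, \eqref{eqn:laplaciano} makes both nonnegative summands vanish: from $(K_M-c)(1-\nu^2)\equiv 0$ and $1-\nu^2>0$ we get $K_M\equiv c$ on $\Omega$, so $\Omega$ has constant curvature and we may regard $M=\M^2(c)$; and $q\equiv 0$ says that the Abresch--Rosenberg differential $Q$ vanishes identically. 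By the Abresch--Rosenberg theory (cf.\ \cite{fm2}), a CMC surface in $\M^2(c)\times\R$ with $Q\equiv 0$ is invariant under a $1$-parameter group of ambient isometries, which is the asserted conclusion.

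The main obstacle is not the inequality, which follows rather mechanically once $\nu\le 0$ and $q\ge 0$ are secured, but the rigidity. There one must verify that the vanishing of the normal derivative at a single boundary point genuinely propagates to $\psi\equiv g(\nu_0)$ on the whole connected surface, and then correctly invoke the Abresch--Rosenberg classification to extract geometric invariance from the purely analytic condition $Q\equiv 0$. The nonnegativity of $q$ for an arbitrary base $M$ is the one point that requires an honest computation rather than a citation, since the clean identity $q=\tfrac{4}{\lambda^2}|Q|^2$ is only available in the model space.
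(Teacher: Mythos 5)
Your proof of the inequality is, in substance, the paper's own argument: the same auxiliary function $\psi=h+g(\nu)$, subharmonicity read off from \eqref{eqn:laplaciano}, constancy of $\psi$ along $\partial\Sigma$, and the boundary maximum principle fed into \eqref{eqn:kg}; your dichotomy (interior maximum $\Rightarrow$ $\psi$ constant, boundary-only maximum $\Rightarrow$ strict inequality via Hopf) is exactly how the paper obtains the rigidity statement as well. One of your additions genuinely improves on what the paper writes down: the paper simply asserts that \eqref{eqn:laplaciano} is nonnegative, but the identity $q=\frac{4}{\lambda^2}|Q|^2$ is only available when $M=\M^2(c)$, whereas the theorem allows $K_M>c$; your pointwise verification that the right-hand side of \eqref{eqn:q} is nonnegative for an arbitrary base is therefore a needed supplement, and it is correct --- writing $d=\frac{k_1-k_2}{2}$ and letting $\theta$ be the angle of $E_3^\top$ in a principal frame, one gets $q=4H^2d^2-2cH(1-\nu^2)\cos(2\theta)\,d+\frac{c^2}{4}(1-\nu^2)^2$, whose discriminant is $-4c^2H^2(1-\nu^2)^2\sin^2(2\theta)\leq 0$ (your expression is off by a harmless factor of $4$).

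Two points need repair. First, your derivation of $\nu\leq 0$ is incomplete when $\nu_0=0$: the nonzero interior extremum of $h$ could be a negative minimum, where $\Delta h=2H\nu\geq 0$ gives $\nu\geq 0$, and indeed the mirror image of any admissible graph (lying below the slice, with $\nu>0$ in the interior) satisfies every hypothesis of the theorem, so nothing ``forces'' $\nu\leq 0$. The correct statement is that up to the vertical reflection $(p,t)\mapsto(p,-t)$ --- which preserves $H$, $\Omega$ and $\kappa_g$ and flips $(h,\nu)$ to $(-h,-\nu)$ --- one may normalize $\nu\leq 0$; for $\nu_0<0$ your continuity argument does work as stated. (The paper asserts $\nu\leq 0$ with no justification at all, so this is a flaw in exposition shared with the source.) Second, in the rigidity step your citation is misdirected: \cite{fm2} concerns the holomorphicity of the quadratic differential, not the classification of surfaces with $Q\equiv 0$; that classification is Abresch--Rosenberg's and is stated for \emph{complete} surfaces, whereas $\Sigma$ here is a compact graph with boundary, so a local statement is required. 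The paper instead invokes \cite[Lemma 6.1]{esr3}, which is tailored to exactly this situation; note also that constancy of $\psi$ yields strictly more than $q\equiv 0$ --- it forces $E_3^\top$ to be a principal direction with principal curvature $1/g'(\nu)$ --- and this extra structure is the input that lemma exploits. With these two fixes your argument coincides with the paper's proof.
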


\begin{proof}
Let us consider the function $\psi=h+g(\nu)$, defined in terms of (\ref{eqn:g}). Since $\nu\leq 0$ and $K_M\geq c$ in $\Sigma$, equation \eqref{eqn:laplaciano} insures that $\Delta\psi\geq0$ in $\Sigma$. As $\Omega$ is compact and $\psi$ is constant along $\p\Omega$, the maximun principle in the boundary guarantees that $\frac{\partial\psi}{\partial\eta}\geq0$. By using equation \eqref{eqn:kg}, this inequality is equivalent to \eqref{eqn:geodesic-general}. Equality holds at some point of $\p\Omega$ if and only if  $\psi$ is constant so, from \eqref{eqn:laplaciano}, we get that $q=0$ and $K_M=c$ in $\Omega$.
\end{proof}

\begin{obse}\label{rmk:acotacion_kg_general}
The same argument in the proof of Theorem~\ref{teor:acotacion_kg} works when we assume $\Sigma$ is an $H$-graph over a regular domain with $h=0$ and $\nu=\nu_0$ in $\partial\Omega$ for some $-1<\nu_0\leq 0$. In this case, the lower bound for the geodesic curvature becomes
\begin{equation}\label{eqn:geodesic-general-arbitrario}
\kappa_g\geq\frac{-4H^2+c(1-\nu_0^2)}{4H\sqrt{1-\nu_0^2}},
\end{equation}
which coincides with~\eqref{eqn:geodesic-general} for $\nu_0=0$. If equality holds at some point $p\in\partial\Omega$, then $\Omega$ has constant curvature and $\Sigma$ has zero Abresch-Rosenberg differential.

Observe that if we suppose $-1<\nu\leq\nu_0$ in $\partial\Omega$ for some $\nu_0\leq 0$ (rather than $\nu=\nu_0$ in $\partial\Omega$) and there exists a point $p\in\partial\Omega$ such that $\nu(p)=\nu_0$ and at which~\eqref{eqn:geodesic-general-arbitrario} becomes and equality, then $\Omega$ has constant curvature and $Q=0$ in $\Sigma$. In the case $M=\M^2(c)$, equality holds if and only if $\Sigma$ is a spherical cap of a standard rotational sphere.
\end{obse}

We now adjust the value of $H$ for which the lower bound is exactly zero, which provides a characterization of the rotationally invariant spheres in $\Sp^2(c)\times\R$.

\begin{coro}\label{coro_convexo}
Let $M$ be a orientable complete Riemannian surface with $K_M\geq c>0$ in $M$. Then, each compact embedded $H$-surface in $M\times\R$ with $0<H<\frac{1}{2}\sqrt c$ is an  $H$-bigraph over a connected domain $\Omega$ and $M\smallsetminus\Omega$ is a finite union of curves of non-negative geodesic curvature. Furthermore, either
\begin{itemize}
 \item their geodesic curvature is strictly positive, or
 \item $M=\Sp^2(c)$, $\Omega$ is a closed hemisphere and $\Sigma$ is a rotationally invariant $H$-sphere for $H=\frac{1}{2}\sqrt c$ (in this case $\kappa_g$ identically vanishes).
\end{itemize}
\end{coro}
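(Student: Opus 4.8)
The plan is to run the compact surface through Corollary~\ref{coro_compacto} and then let the sign of the resulting curvature bound dictate the topology of $M\sm\Omega$ via Gauss--Bonnet. I begin with two preliminary observations. Since $M$ is complete with $K_M\ge c>0$, it is compact by Bonnet--Myers, and being orientable with $\int_M K_M=2\pi\chi(M)>0$ it is a topological sphere. On the other hand, because $\Sigma$ is a compact embedded $H$-surface, the Alexandrov reflection principle for vertical reflections (as recalled before Corollary~\ref{coro_compacto}) realises $\Sigma$ as an $H$-bigraph, symmetric about a slice that we normalise to $M\times\{0\}$, over a domain $\Omega\subseteq M$. As $\Sigma$ is connected and is the double of $\Omega$ across $\partial\Omega$, the domain $\Omega$ is connected; and since $\Sigma$ meets $M\times\{0\}$ in the nonempty set $\partial\Omega$, we have $\Omega\ne M$, so $M\sm\Omega\ne\emptyset$. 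Here $c=\inf\{K_M(p):p\in\Omega\}>0>-4H^2$, so Corollary~\ref{coro_compacto} applies and yields, along every component of $\partial\Omega$,
\[
\kappa_g\ge -H+\frac{c}{4H}=\frac{c-4H^2}{4H},
\]
the geodesic curvature being measured with respect to the conormal pointing out of $\Omega$.

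Next I would examine the components $D_1,\dots,D_k$ of $M\sm\Omega$, finitely many because $M$ is compact. For each of them the conormal to $\partial\Omega$ pointing out of $\Omega$ is precisely the inward conormal of $D_i$, so the quantity bounded above is exactly the geodesic curvature appearing in Gauss--Bonnet on $D_i$. Since $K_M\ge c>0$ gives $\int_{D_i}K_M>0$, and since $0<H<\tfrac12\sqrt c$ makes $\tfrac{c-4H^2}{4H}>0$ and hence $\int_{\partial D_i}\kappa_g>0$, Gauss--Bonnet forces
\[
2\pi\chi(D_i)=\int_{D_i}K_M+\int_{\partial D_i}\kappa_g>0,
\]
so $\chi(D_i)=1$ and $D_i$ is a disk, strictly convex because $\kappa_g>0$ on all of $\partial D_i$. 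This proves the statement together with its first alternative. The second alternative lives at the threshold $H=\tfrac12\sqrt c$, where the bound degenerates to $\kappa_g\ge0$: the same Gauss--Bonnet computation still gives disks, now only weakly convex, and the dichotomy becomes whether $\kappa_g$ is everywhere positive (first alternative again) or vanishes somewhere.

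For the rigidity, suppose $H=\tfrac12\sqrt c$ and $\kappa_g(p)=0$ for some $p\in\partial\Omega$. Then equality holds in the estimate, so by the rigidity part of Corollary~\ref{coro_compacto} the domain $\Omega$ has constant curvature $c$ and $\Sigma$ is invariant under a one-parameter group $G$ of isometries acting trivially on the vertical lines. Its boundary is then a circle of revolution of constant geodesic curvature equal to the bound $0$, i.e.\ a geodesic; in a surface of constant curvature $c$ this is a great circle, so $\Omega$ is a closed hemisphere of $\Sp^2(c)$ and $\Sigma$ is the corresponding rotationally invariant sphere from Section~\ref{sec_ejemplos} (the invariant tori are excluded, their boundary circles not being geodesics). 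The group $G$, viewed as isometries of $M$, preserves $\Omega$ and hence $D:=M\sm\Omega$, acting on the latter with a fixed point $p_S$; thus $D$ is rotationally symmetric and its metric can be written $dr^2+f(r)^2\,d\theta^2$ with $f(0)=0$, $f'(0)=1$ at $p_S$. The boundary condition that $\partial D=\partial\Omega$ be a geodesic of length $2\pi/\sqrt c$ reads $f'(r_1)=0$ and $f(r_1)=1/\sqrt c$, while $K_M\ge c$ means $f''\le -cf$. Comparing with $\bar f(r)=\tfrac{1}{\sqrt c}\sin(\sqrt c\,r)$ through the Wronskian $w=\bar f f'-f\bar f'$, one finds $f\le\bar f$, so $\tfrac1{\sqrt c}=f(r_1)\le\bar f(r_1)\le\max\bar f=\tfrac1{\sqrt c}$; equality throughout forces $r_1=\tfrac{\pi}{2\sqrt c}$, $f\equiv\bar f$ and $K_M\equiv c$ on $D$. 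Hence $D$ is also a hemisphere of $\Sp^2(c)$ and $M=\Sp^2(c)$, which is the second alternative.

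I expect the genuine difficulty to be concentrated in this final rigidity step, and specifically in two places: first, arguing that the invariance of $\Sigma$ promotes $G$ to a rotational action on all of $M$ with a genuine smooth fixed point interior to $D$ (rather than just a symmetry of $\Omega$), and second, making the Sturm comparison rigid, i.e.\ ensuring that the single boundary equality $f(r_1)=\bar f(r_1)$ propagates back to $f\equiv\bar f$ and $K_M\equiv c$ on the whole of $D$. By contrast, the bigraph structure, the count $\chi(D_i)=1$, and the convexity dichotomy are immediate consequences of Corollary~\ref{coro_compacto} and Gauss--Bonnet.
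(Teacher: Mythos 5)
Your proof is correct, and on the main assertion it coincides with what the paper intends: the corollary is stated with no written proof, as an immediate consequence of Corollary \ref{coro_compacto}, and your chain --- Bonnet--Myers, Alexandrov reflection to a symmetric bigraph, the bound $\kappa_g\geq -H+\frac{c}{4H}$ reinterpreted on each complementary component (the outer conormal of $\Omega$ is the inner conormal of $D_i$), and Gauss--Bonnet forcing $\chi(D_i)=1$ and convexity --- is exactly that one-line derivation made explicit. Two comments. First, you silently repaired a typo: with $0<H<\frac{1}{2}\sqrt c$ as written the bound is strictly positive and the second alternative is vacuous; the intended hypothesis, consistent with the sentence preceding the corollary (``we now adjust the value of $H$ for which the lower bound is exactly zero'') and with your treatment, is $0<H\leq\frac{1}{2}\sqrt c$. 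It is also worth recording, as you implicitly do, that $\kappa_g=0$ at a point forces $\inf_\Omega K_M=c$, since the constant in Corollary \ref{coro_compacto} is $\inf_\Omega K_M$, which a priori could exceed the global lower bound $c$. Second, and more substantially, your rigidity endgame supplies an argument the paper never records: Corollary \ref{coro_compacto} only yields constant curvature on $\Omega$ and invariance of $\Sigma$, and the ODE classification of Section \ref{sec_ejemplos} (spheres versus tori, the tori correctly excluded because their boundary circles have $\kappa_g=\frac{1}{2H}\neq0$) identifies $\overline\Omega$ intrinsically as a round hemisphere but says nothing about $M\sm\Omega$; your circle action plus Wronskian comparison, giving $f\leq\bar f$ and then $f\equiv\bar f$ and $K_M\equiv c$ on $D$ from the single boundary equality $f(r_1)=\frac{1}{\sqrt c}$, is correct and genuinely needed to reach $M=\Sp^2(c)$. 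The one caveat is the point you flagged yourself: promoting $G$ to a rotation of all of $M$ (hence of $D$) rests on reading the invariance group of Corollary \ref{coro_compacto} as a group of ambient isometries of $M\times\R$, which is what the paper asserts via Lemma 6.1 of \cite{esr3}, although that lemma is proved in the homogeneous setting. If one prefers to avoid this, an alternative finish is to double $D$ across its geodesic boundary and invoke the classical fact that a simple closed geodesic in a complete surface with $K\geq c>0$ has length at most $2\pi/\sqrt c$, with equality only for $\Sp^2(c)$; your boundary circle has length exactly $2\pi/\sqrt c$, so the same rigidity follows.
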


Observe that, since $\Omega$ is a connected domain with regular boundary and $M$ is topologically a $2$-sphere under the assumptions of the Corollary, the connected components of $M\smallsetminus\Omega$ are topological disks. In the case $M=\Sp^2(c)$, each of these disks is geodesically convex (i.e. every minimizing geodesic joining a pair of points of its boundary is interior to it) since it is bounded by a curve whose geodesic curvature does not change sign. In particular, such a disk must lie in a hemisphere of $\Sp^2(c)$.

\section{Further boundary curvature estimates}

\noindent In this section, we will obtain better estimates for the geodesic curvature of the boundary by assuming restrictions on the maximum height that the surface can reach. In order to achieve this, we will use a technique which has its origins in a paper by Payne and Philippin \cite{paph1} and which has also been used by Ros and Rosenberg in \cite{ror1}.

Let $\Sigma\subseteq M\times\R$ be a constant mean curvature $H>0$ surface which is a graph over a domain $\Omega\subseteq M$ and extends continuously to the boundary of $\Omega$ with zero values. For any given $m>0$, let consider the function $g_m:[-1,1]\rightarrow\R$ determined by
\begin{equation}\label{eqn:g-general}
g_m'(t)=\frac{4mH}{4H^2+c(1-t^2)},\quad\quad g(0)=0,
\end{equation}
which is strictly increasing and satisfies $m\cdot\alpha(c,H,\nu_0)=g_m(\nu_0)-g_m(-1)$. Moreover,
\[X=\frac{2H\nu(2m-1)}{m(1-\nu^2)}E_3^\top-\frac{2H\nu g_m'(\nu)}{m(1-\nu^2)}AE_3^\top\]
is a smooth vector field on $\Sigma\smallsetminus V$, where $V=\{p\in\Sigma:\nu(p)=-1\}$ is the subset of $\Sigma$ with vertical Gauss map. We will consider the second order elliptic operator $L$ on $C^\infty(\Sigma\smallsetminus V)$ given by $Lf=\Delta f+X(f)$, and the function $\psi_m=h+g_m(\nu)\in C^\infty(\Sigma)$. We are now interested in working out $L\psi_m$. By using Lemma \ref{lema_formulas} and the identity $A^2=2HA-\det(A)\cdot\mathrm{id}$ in $T\Sigma$, we obtain
\begin{equation}\label{eqn:operador-psi}
L\psi_m=-\frac{4H(m-1)(m-\frac{1}{2})\nu}{m}-\frac{4Hm\nu(1-\nu^2)(K_M-c)}{4H^2+c(1-\nu^2)}.
\end{equation}
The second term in the RHS is non-negative since $K_M\geq c$. Moreover, for $m\geq 1$ or $m\leq \frac{1}{2}$, the first term is also positive so the function $\psi_m$ verifies $L\psi_m\geq0$ in $\Sigma\smallsetminus V$. Thus, it is possible to apply the maximum principle for the operator $L$ in $\Sigma\sm V$, which insures that $\psi_m$ cannot achieve an interior maximum in $\Sigma\sm V$ unless it is constant. 

\begin{lema}\label{lema_funcion_constante}
Let $\Sigma\subseteq M\times\R$ be a constant mean curvature $H>0$ graph over a (not necessarily compact) domain $\Omega\subseteq M$ and suppose that $c=\inf\{K_M(p):p\in\Omega\}>-4H^2$. If $\psi_m$ is constant in $\Sigma$ for some $m\leq\frac{1}{2}$, then
\begin{itemize}
 \item[a)] $m=\frac{1}{2}$ and $K_M$ is constant in $\Omega$,
 \item[b)] $\Sigma$ is invariant by a $1$-parameter group of isometries which preserve the height function.
\end{itemize}
In particular, if $c>0$ and $M=\Sp^2(c)$, then $\Sigma$ is a compact rotationally invariant torus and, if $c\leq 0$ and $M=\H^2(c)$, then $\Sigma$ is an invariant horizontal cylinder, both described in Section \ref{sec_ejemplos}.
\end{lema}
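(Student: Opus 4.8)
The plan is to turn the hypothesis ``$\psi_m$ is constant'' into the vanishing of each of the two non-negative summands in (\ref{eqn:operador-psi}), and then to differentiate the constancy relation to extract a pointwise rigidity condition on the shape operator. First I would note that if $\psi_m$ is constant on $\Sigma$, then in particular it is constant on $\Sigma\sm V$, so $L\psi_m\equiv 0$ there. By the sign discussion preceding the statement, for $m\le\frac12$ both terms on the right-hand side of (\ref{eqn:operador-psi}) are non-negative; since their sum vanishes, each vanishes identically. Vanishing of the first term $-\frac{(m-1)(m-\frac12)H\nu}{m}$ uses that $\Sigma$ is a graph: at interior points the tangent plane is non-vertical, so $\nu<0$ and $\nu$ is not identically zero, while $m-1\neq0$ for $m\le\frac12$; hence $m-\frac12=0$, that is $m=\frac12$. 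With $m=\frac12$ the second term reduces to $\nu(1-\nu^2)(K_M-c)\equiv0$, and since $\nu(1-\nu^2)\neq0$ on the interior away from $V$, we obtain $K_M=c$ there and, by continuity, $K_M\equiv c$ on $\Omega$. This establishes (a).

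For (b), the key step is to differentiate $\psi_{1/2}=h+g_{1/2}(\nu)\equiv\text{const}$, which yields $\nabla h=-g_{1/2}'(\nu)\nabla\nu$. Substituting $\nabla h=E_3^\top$ and $\nabla\nu=-AE_3^\top$ from Lemma \ref{lema_formulas}, I get, on the open set where $E_3^\top\neq0$,
\[
AE_3^\top=\frac{1}{g_{1/2}'(\nu)}\,E_3^\top=\frac{4H^2+c(1-\nu^2)}{2H}\,E_3^\top.
\]
Thus $E_3^\top$ is everywhere a principal direction of $\Sigma$, with principal curvature depending only on $\nu$, the complementary principal curvature being $\frac{-c(1-\nu^2)}{2H}$ by the constant mean curvature condition. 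Together with $K_M\equiv c$, so that the base is locally $\M^2(c)$, this is precisely the structural rigidity needed to invoke \cite[Lemma 6.1]{esr3} and conclude that $\Sigma$ is invariant under a $1$-parameter group of isometries acting trivially on the vertical lines.

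To identify the surface in the ``in particular'' clause, I would combine invariance and $K_M\equiv c$ with the classification of Section \ref{sec_ejemplos}. The rotationally invariant spheres satisfy $\psi_1=\text{const}$ but not $\psi_{1/2}=\text{const}$ (since $\psi_{1/2}=\psi_1-\tfrac12 g_1(\nu)$ and $\nu$ is non-constant), so the condition $m=\frac12$ excludes them; the remaining invariant $H$-bigraphs are the rotationally invariant tori when $c>0$ and $M=\Sp^2(c)$, and the invariant horizontal cylinders when $c\le0$ and $M=\H^2(c)$ (the hyperbolic-translation cylinders of Proposition \ref{unicidad_cilindro_H2xR} for $c<0$). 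This is consistent with the observation recorded there that these surfaces reach exactly half the height of the corresponding spheres.

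The hard part will be the passage from the principal-direction rigidity to genuine invariance: showing that $E_3^\top$ being a principal direction with $\nu$-dependent eigenvalue forces $\Sigma$ to be orbit-generated is the substantive point, and I would lean on \cite[Lemma 6.1]{esr3} rather than reintegrate the defining ODE systems by hand. A secondary technical care is needed along the vertical-Gauss-map set $V=\{\nu=-1\}$, where $E_3^\top=0$ and both the operator $L$ and the principal-direction statement degenerate; since $V$ is nowhere dense, the conclusions $K_M\equiv c$ and the eigenvector identity extend across $V$ by continuity.
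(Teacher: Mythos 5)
Your part (a) is correct and coincides with the paper's argument (the paper phrases it as $(m-1)(m-\tfrac12)\leq 0$ forcing $m=\tfrac12$, you phrase it as each nonnegative summand of (\ref{eqn:operador-psi}) vanishing; these are the same computation), and your first step in (b) --- differentiating $\psi_{1/2}\equiv\mathrm{const}$ to get $AE_3^\top=\frac{1}{g_{1/2}'(\nu)}E_3^\top$, with the continuity extension across the nowhere dense set $V$ --- is also exactly the paper's. The genuine gap is the step you yourself flag as ``the hard part'': you treat \cite[Lemma 6.1]{esr3} as a black box whose hypothesis is ``$E_3^\top$ principal with $\nu$-dependent eigenvalue, plus $K_M\equiv c$,'' but nothing supports that reading. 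Where this paper cites that lemma \emph{directly} (proof of Theorem \ref{teor:acotacion_kg}), the input is $q=0$ and $K_M=c$, i.e.\ a vanishing Abresch--Rosenberg differential --- and $q$ does \emph{not} vanish on the rotational tori and hyperbolic cylinders that are precisely the surfaces your lemma must produce. So a literal invocation of \cite[Lemma 6.1]{esr3} in your setting either does not apply or proves the wrong thing (it would point to rotational spheres, which your own final paragraph correctly excludes).

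What the paper actually does to bridge this gap is the missing idea in your proposal: from $\nabla\psi_{1/2}=0$ it extracts, beyond the eigenvector identity, the quantitative relations $\|\nabla\nu\|^2=(1-\nu^2)/g'(\nu)^2$ and $\det(A)=\frac{1}{g'(\nu)}\bigl(2H-\frac{1}{g'(\nu)}\bigr)$, introduces the function $f$ with $f'(t)=\bigl((1-t^2)(4H^2+c(1-t^2))\bigr)^{-1/2}$, and verifies via Lemma \ref{lema_formulas}(iv), the Gauss equation and $K_M\equiv c$ that $f(\nu)$ is \emph{harmonic} on $\Sigma\sm V$. This yields a conformal parameter $z$ with $\re(z)=f(\nu)$, and only then can one \emph{repeat the arguments} of \cite[Lemma 6.1]{esr3} (adapt its proof, not cite its statement) to conclude invariance under a $1$-parameter group acting trivially on vertical lines. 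Without this harmonic-function construction, your ``structural rigidity'' does not by itself yield orbit-generation, so part (b) is unproved as written. On the positive side, your exclusion of the spheres in the identification step --- $\psi_1$ constant but $\psi_{1/2}$ not, since $\psi_1-\psi_{1/2}=\tfrac12 g_1(\nu)$ is a nonconstant function of $\nu$ --- is correct and makes explicit a point the paper leaves implicit; but it only becomes available once invariance has been established by the argument above.
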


\begin{proof}
If $\psi_m$ is constant, then $L\psi_m=0$ so from (\ref{eqn:operador-psi}) we get that $(m-1)(m-\frac{1}{2})\leq 0$ which is only possible if $m=\frac{1}{2}$. Then, as equality in (\ref{eqn:operador-psi}) holds, $K_M$ must be constant in $\Sigma\sm V$ so it is constant in $\Sigma$ as $K_M$ is continuous and $V$ has empty interior.

Now, suppose that $m=\frac{1}{2}$ and $\psi_{1/2}$ is constant. On one hand, from $\nabla\psi_{1/2}=0$ we obtain $AE_3^\top=\frac{1}{g_{1/2}'(\nu)}E_3^\top$ so $E_3^\top$ must be a principal direction and $\frac{1}{g_{1/2}'(\nu)}$ its corresponding principal curvature. We also deduce the following expressions: 
\begin{align}
\det(A)&=\frac{1}{g_{1/2}'(\nu)}\left(2H-\frac{1}{g_{1/2}'(\nu)}\right),&
\|\nabla\nu\|^2&=\langle AE_3^\top,AE_3^\top\rangle=\frac{1-\nu^2}{g_{1/2}'(\nu)^2}.\label{eqn:nabla}
\end{align}
If we consider the differentiable function $f:\ ]-1,1[\ \rightarrow\R$ determined by
\[f'(t)=\frac{1}{\sqrt{(1-t^2)(4H^2+c(1-t^2))}},\quad\quad f(0)=0,\]
and take into account (\ref{eqn:nabla}) and Lemma \ref{lema_formulas}, it is easy to check that
\begin{align*}
\Delta(f(\nu))&=f''(\nu)\|\nabla\nu\|^2+f'(\nu)\Delta\nu\\
&=\frac{f''(\nu)}{g_{1/2}'(\nu)^2}(1-\nu^ 2)+f'(\nu)(2\det(A)-4H^2-c(1-\nu^2))\nu=0,
\end{align*}
where we have also used that $K_M$ is constant by item (a). As $f(\nu)$ is a non-constant harmonic function on $\Sigma\sm V$, we can (at least locally) take a conformal parameter $z=x+iy$ on $\Sigma\smallsetminus V$ with $x=f(\nu)$. Now, by repeating the arguments given in \cite[Lemma 6.1]{esr3}, we conclude that $\Sigma$ is invariant by a $1$-parameter group of isometries of $\M^2(c)\times\R$.

Since all the fundamental data only depend on $x$, that group can be understood as translations in the $y$-direction. Thus, if we show that $h_y$, the derivative of the height function with respect to $y$, vanishes, item (b) will be checked out. As the parameter is conformal, $h_y=\langle\nabla h,\partial_y\rangle=-\langle JE_3^\top,\partial_x\rangle$, but $\partial_x$ has the same direction as $\nabla\nu=-AE_3^\top$ because $x=f(\nu)$ and $AE_3^\top=\frac{1}{g_{1/2}'(\nu)}E_3^\top$ from $\nabla\psi_{1/2}=0$, so $h_y=0$.

Finally, just by checking all the invariant surfaces under $1$-parameter groups of isometries preserving the height function (cf. Section \ref{sec_ejemplos}), it is easy to realize that those mentioned in the statement are the only ones which satisfy that $\psi_{1/2}$ is constant.
\end{proof}

\begin{teor}\label{thm:acotacion-kg-adicional}
Let $\Sigma\subseteq M\times\R$ be a $H$-bigraph over a compact regular domain $\Omega\subset M$ and suppose that $c=\inf\{K_M(p):p\in\Omega\}$ satisfies that $4H^2+c>0$. If there exists $0<m\leq\frac{1}{2}$ such that $|h|\leq m\cdot\alpha(c,H,0)$, then the following lower bound for the geodesic curvature of $\partial\Omega$ in $M$ (with respect to the outer conormal vector field) holds:
\[\kappa_g\geq\frac{(4-8m)H^2+c}{4mH}.\]
\end{teor}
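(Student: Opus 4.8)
The plan is to mimic the structure of the proof of Theorem \ref{teor:acotacion_kg}, but now working with the operator $L=\Delta+X(\cdot)$ and the function $\psi_m=h+g_m(\nu)$ instead of the plain Laplacian and $\psi$. The starting point is equation (\ref{eqn:operador-psi}): since $K_M\geq c$ and $\nu\leq 0$, and since $0<m\leq\frac12$ makes $(m-1)(m-\frac12)\geq 0$, both terms on the right-hand side are nonnegative, so $L\psi_m\geq 0$ on $\Sigma\sm V$. The maximum principle for $L$ then applies on $\Sigma\sm V$, and I intend to locate where $\psi_m$ attains its maximum.

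First I would observe that the hypothesis $|h|\leq m\cdot\alpha(c,H,\nu_0)$ enters precisely to control the boundary behaviour and to rule out an interior maximum in the "wrong" place. Along $\partial\Omega$ we have $h=0$ and $\nu=\nu_0$, so $\psi_m=g_m(\nu_0)$ is constant there. A direct integration of (\ref{eqn:g-general}) shows $g_m(\nu_0)=m\cdot g_1(\nu_0)$, and comparing $g_1$ with the defining integral of $\alpha(c,H,\nu_0)$ in (\ref{altura_maxima}) I expect to identify $g_m(1)-g_m(\nu_0)$ (or an analogous difference) with exactly $m\cdot\alpha(c,H,\nu_0)$. The role of the height bound is then to guarantee that the boundary value $g_m(\nu_0)$ is at least as large as the value of $\psi_m$ at any point where $\nu$ reaches $-1$, i.e. on $V$; this prevents the maximum of $\psi_m$ from escaping to $V$ (where the vector field $X$, and hence $L$, degenerates) and forces the maximum of $\psi_m$ on $\cl(\Sigma\sm V)$ to be attained on $\partial\Omega$.

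Once the maximum is known to sit on $\partial\Omega$, I would invoke the boundary (Hopf) maximum principle for the operator $L$ to conclude $\frac{\p\psi_m}{\p\eta}>0$ along $\partial\Omega$, unless $\psi_m$ is constant. Computing $\frac{\p\psi_m}{\p\eta}$ along $\partial\Omega$ by the same conormal calculation that produced (\ref{eqn:kg})—now with $g_m'$ in place of $g'$ and using $\eta=-E_3^\top$, $\frac{\p h}{\p\eta}=-\|E_3^\top\|^2$, $\frac{\p\nu}{\p\eta}=2H\|E_3^\top\|^2+\|E_3^\top\|^3\kappa_g$, together with $\|E_3^\top\|^2=1-\nu_0^2$ on the boundary—yields a sign condition that rearranges into precisely the asserted inequality $\kappa_g\geq\frac{(4-8m)H^2+c(1-\nu_0^2)}{4mH\sqrt{1-\nu_0^2}}$. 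The case of equality, or of a constant $\psi_m$, would be handed off to Lemma \ref{lema_funcion_constante}.

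The main obstacle I anticipate is the argument that the height restriction keeps the maximum of $\psi_m$ away from the degeneracy set $V$. On $V$ the field $X$ blows up (it carries a factor $1/(1-\nu^2)$), so the operator $L$ is only elliptic on $\Sigma\sm V$, and one must verify that $\psi_m$ does not attain its supremum on $V$ nor use $V$ as an interior barrier. The clean way is a quantitative comparison: bound $\psi_m=h+g_m(\nu)$ from above at a hypothetical point of $V$ using $h\leq m\cdot\alpha(c,H,\nu_0)$ and the explicit value $g_m(-1)$, and check this upper bound does not exceed the boundary value $g_m(\nu_0)$; matching these two quantities is exactly where the constant $\alpha(c,H,\nu_0)$ and the factor $m$ must line up, so the delicate part is the bookkeeping that shows the chosen height threshold is the sharp one making this comparison work.
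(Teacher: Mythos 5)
Your proposal is correct and follows essentially the same route as the paper's proof: the three-way case analysis on the location of the maximum of $\psi_m$ (interior to $\Sigma\sm V$, on $\partial\Omega$, or on $V$), with the height hypothesis used exactly as you describe, via $\psi_m\leq h+g_m(-1)\leq m\cdot\alpha(c,H,\nu_0)+g_m(-1)=g_m(\nu_0)$ (note $g_m$ is odd and $m\cdot\alpha(c,H,\nu_0)=g_m(1)+g_m(\nu_0)$), which pushes the maximum back to the boundary, where the Hopf principle and the conormal computation (\ref{eqn:kg}) with $g_m'$ yield the stated bound. The interior case handled by Lemma \ref{lema_funcion_constante} and the sign analysis of (\ref{eqn:operador-psi}) also coincide with the paper, so the ``delicate bookkeeping'' you flagged is precisely the paper's one-line comparison and presents no gap.
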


\begin{proof}
We will suppose that $\nu=\nu_0$ for some $-1<\nu_0\leq0$ along $\partial\Sigma$, so the Theorem will follow from making $\nu_0=0$ (see also Remark~\ref{rmk:acotacion-kg-adicional} below).
Let us consider the function $\psi_m=h+g_m(\nu)\in C^\infty(\Sigma)$, which verifies $L\psi_m\geq 0$ in view of (\ref{eqn:operador-psi}). As $\Sigma$ is compact, there exists a point $p_0\in\Sigma$ where $\psi_m$ attains its maximum. We distinguish three possibilities:
\begin{itemize}
 \item If $p_0$ is an interior point of $\Sigma\sm V$, then $\psi_m$ is constant in $\Sigma$, which implies that the maximum is also attained in $\p\Sigma$.
 \item If $p_0\in\partial\Sigma$, then such a maximum is attained in the whole boundary $\p\Sigma$ since $(\psi_m)_{|\partial\Sigma}$ is constant. Then the boundary maximum principle for the operator $L$ guarantees that $\frac{\p\psi_m}{\p\eta}\geq 0$ along $\partial\Sigma$. It is straightforward to check from (\ref{eqn:kg}) that this is equivalent to the inequality in the statement above.
 \item If $p_0\in V$, then $\nu(p_0)=-1$. Observe that $h\leq m\cdot\alpha(c,H,\nu_0)=g_m(\nu_0)-g_m(-1)$, so $\psi_m\leq\psi_m(p_0)=h(p_0)+g_m(-1)=h(p_0)-m\cdot\alpha(c,H,\nu_0)+g_m(\nu_0)\leq g_m(\nu_0)$ and, since $\psi_m$ is equal to $g_m(\nu_0)$ in $\partial\Sigma$, the maximum is also attained in the boundary, which reduces this case to the previous one.\qedhere
\end{itemize}
\end{proof}

We now adjust the constant $0<m\leq\frac{1}{2}$ to guarantee the convexity of the boundary, as we did in Corollary \ref{coro_convexo}.

\begin{coro}\label{coro:convexidad-adicional}
Let $\Sigma\subseteq M\times\R$ be an $H$-bigraph over a compact regular domain $\Omega$ with $H>0$. Suppose that $c=\inf\{K_M(p):p\in\Omega\}$ satisfies that $4H^2+c>0$. In any of the situations:
\begin{itemize}
 \item[i)] $c\geq 0$ and $h\leq\frac{1}{2}\alpha(c,H,0)$ in $\Sigma$, or
 \item[ii)] $c<0$ and $h\leq \frac{4H^2+c}{8H^2}\alpha(c,H,0)$ in $\Sigma$,
\end{itemize}
the boundary $\p\Omega$ is convex in $M$ with respect to the outer conormal vector field.
\end{coro}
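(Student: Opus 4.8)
The plan is to obtain convexity, namely $\kappa_g\geq0$, directly from the preceding theorem by selecting the free parameter $m$ optimally in each of the two regimes for $c$. Recall that, whenever $|h|\leq m\cdot\alpha(c,H,\nu_0)$ for some $0<m\leq\frac12$, that theorem yields
\[
\kappa_g\geq\frac{(4-8m)H^2+c(1-\nu_0^2)}{4mH\sqrt{1-\nu_0^2}}.
\]
Since $m$, $H$ and $\sqrt{1-\nu_0^2}$ are all positive, the right-hand side is nonnegative if and only if its numerator is, i.e. if and only if
\[
m\leq m_0:=\frac{4H^2+c(1-\nu_0^2)}{8H^2}.
\]
Thus the entire argument reduces to exhibiting, in each case, an admissible value $m\in(0,\tfrac12]$ with $m\leq m_0$ for which the prescribed height bound holds.

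First I would dispatch case (i). When $c\geq0$ we have $m_0\geq\frac12$, so $m=\frac12$ is admissible and satisfies $m\leq m_0$; moreover the hypothesis $h\leq\frac12\alpha(c,H,\nu_0)$ is exactly $|h|\leq\frac12\alpha(c,H,\nu_0)$ once one notes that $0\leq h$ for an upward graph with zero boundary values, so that $|h|=h$. Applying the theorem with $m=\frac12$ makes the numerator equal to $c(1-\nu_0^2)\geq0$, whence $\kappa_g\geq0$. For case (ii), with $c<0$, I would first check that $m_0$ lands in the admissible range: the bound $m_0<\frac12$ is immediate from $c<0$, while $m_0>0$ follows from $4H^2+c(1-\nu_0^2)\geq4H^2+c>0$, using $0<1-\nu_0^2\leq1$ together with $c<0$. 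Taking $m=m_0$, which is precisely the factor multiplying $\alpha(c,H,\nu_0)$ in hypothesis (ii), the numerator vanishes by construction, so the theorem again gives $\kappa_g\geq0$.

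The main point is the elementary observation that the height hypothesis can be read as a choice of the parameter $m$, and that the threshold separating a useful (nonnegative) bound from a useless one is exactly $m_0$; this is what forces the dichotomy $c\geq0$ versus $c<0$, since $m_0\geq\frac12$ precisely when $c\geq0$. I do not expect a genuine obstacle here beyond two routine verifications: that $m_0$ remains in $(0,\tfrac12]$ in the relevant regime, as checked above, and the passage from the corollary's one-sided hypothesis $h\leq m\alpha$ to the two-sided $|h|\leq m\alpha$ demanded by the theorem, which is justified by the nonnegativity of the height function for a graph bounded by the slice $M\times\{0\}$ and oriented so that $\nu\leq0$.
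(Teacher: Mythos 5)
Your proof is correct and is precisely the argument the paper leaves implicit: the corollary follows from the preceding theorem by choosing $m=\frac{1}{2}$ when $c\geq 0$ (numerator $c(1-\nu_0^2)\geq 0$) and $m=m_0=\frac{4H^2+c(1-\nu_0^2)}{8H^2}\in\left(0,\frac{1}{2}\right)$ when $c<0$ (numerator vanishes), together with the observation that $h\geq 0$ (which follows from $\Delta h=2H\nu\leq 0$ and $h=0$ on $\partial\Sigma$), so the one-sided hypothesis gives $|h|\leq m\,\alpha(c,H,\nu_0)$. Note only that the corollary's statement writes $c(1-\nu_0)^2$ in case (ii), an apparent typo for $c(1-\nu_0^2)$ given the theorem's bound, and your choice of $m_0$ matches the intended formula.
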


\begin{obse}\label{rmk:acotacion-kg-adicional}
The proof of Theorem~\ref{thm:acotacion-kg-adicional} is also valid when $\Sigma$ is an $H$-graph, $H>0$, over a compact regular domain $\Omega\subset M$ with $h=0$ and $\nu=\nu_0$ in $\partial\Omega$ for some  $-1<\nu_0\leq0$. In this case, if we suppose that $|h|\leq m\cdot\alpha(c,H,\nu_0)$, then the lower bound for the geodesic curvature can be improved to the following one:
\[\kappa_g\geq\frac{(4-8m)H^2+c(1-\nu_0^2)}{4mH\sqrt{1-\nu_0^2}}.\]
Moreover, the situations in which we can guarantee that $\partial\Omega$ is convex with respect to the outer conormal vector field (as in Corollary~\ref{coro:convexidad-adicional}) become the following ones under these new capillarity assumptions:
\begin{itemize}
 \item[i)] $c\geq 0$ and $h\leq\frac{1}{2}\alpha(c,H,\nu_0)$ in $\Sigma$, or
 \item[ii)] $c<0$ and $h\leq \frac{4H^2+c(1-\nu_0^2)}{8H^2}\alpha(c,H,\nu_0)$ in $\Sigma$,
\end{itemize}
\end{obse}

We finally wonder whether the compactness hypothesis for the domain of the graph can be removed. In order to achieve this, we will restrict ourselves to the homogeneous ambient space $\M^2(c)\times\R$ and $\nu_0=0$ (that is, $\Sigma$ is an $H$-bigraph), where the technique developed by Ros and Rosenberg in \cite{ror1} can be adapted.

\begin{teor}\label{teor_homogeneos}
Let $\Sigma\subseteq\M^2(c)\times\R$ be a properly embedded $H$-bigraph over a domain $\Omega\subseteq\M^2(c)$ with $4H^2+c>0$, symmetric with respect to $\M^2(c)\times\{0\}$, and suppose that there exists $0<m\leq\frac{1}{2}$ such that $|h|\leq m\cdot\alpha(c,H,0)$ in $\Sigma$. Then, the following lower bound for the geodesic curvature of $\partial\Omega$ in $\M^2(c)$ (with respect to the outer conormal vector field) holds:
\[\kappa_g\geq\frac{(4-8m)H^2+c}{4mH}.\]
Furthermore, if there exists a point in $\partial\Omega$ where the equality is attained, then:
\begin{itemize}
 \item[i)] $\Sigma$ is a rotationally invariant torus (see Section \ref{sec_ejemplos_rotacionales}) if $c>0$,
 \item[ii)] $\Sigma$ is an invariant cylinder under horizontal translations if $c=0$, and
 \item[iii)] $\Sigma$ is an invariant cylinder under hyperbolic translations (see Section \ref{unicidad_cilindro_H2xR}) if $c<0$.
\end{itemize}
\end{teor}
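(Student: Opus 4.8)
The plan is to mirror the compact case treated in the previous theorem, but to replace the "maximum attained on a compact surface" argument by the Alexandrov-type / Ros--Rosenberg displacement technique of \cite{ror1}, which allows one to extract a maximum-principle conclusion on a non-compact properly embedded bigraph. First I would set $\nu_0=0$ throughout (since $\Sigma$ is an $H$-bigraph it meets the slice $M\times\{0\}$ orthogonally, so $\nu=0$ along $\partial\Omega$), and retain the function $\psi_m=h+g_m(\nu)$ together with the operator $L f=\Delta f+X(f)$ introduced before. Formula (\ref{eqn:operador-psi}) still gives $L\psi_m\ge0$ on $\Sigma\sm V$ because $c$ is now the constant ambient curvature (so $K_M-c=0$) and the coefficient $(m-1)(m-\tfrac12)H\nu/m$ is nonnegative for $0<m\le\tfrac12$ and $\nu\le0$ on the lower half-bigraph. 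Likewise the boundary computation (\ref{eqn:kg}) still relates $\frac{\p\psi_m}{\p\eta}$ to $\kappa_g$ exactly as before, so establishing $\frac{\p\psi_m}{\p\eta}\ge0$ at a boundary point where $\psi_m$ is maximal would immediately yield the stated bound.

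The substantive point is to show that $\psi_m$ attains (or suitably approaches) its supremum along $\partial\Omega$ even without compactness. Here I would invoke the hypothesis $|h|\le m\cdot\alpha(c,H,\nu_0)=-g_m(-1)$: exactly as in the third bullet of the previous proof, on the set $V$ where $\nu=-1$ one gets $\psi_m\le g_m(\nu_0)=g_m(0)$, the common boundary value. Thus the supremum of $\psi_m$ on all of $\Sigma$ equals its boundary value $g_m(0)$, and no interior point of $\Sigma\sm V$ can strictly exceed it without forcing $\psi_m$ to be constant by the interior maximum principle for $L$. To convert this into a boundary derivative statement on a non-compact surface, I would use a standard exhaustion/limiting argument in the spirit of \cite{ror1}: slide the surface by the ambient isometries, compare $\psi_m$ with its boundary value on larger and larger pieces, and apply the Hopf boundary-point lemma for the elliptic operator $L$ at a point of $\partial\Omega$ realizing the supremum of the limit configuration. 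Since $L$ has smooth coefficients away from $V$ and $E_3^\top\ne0$ on $\partial\Omega$ (because $\nu_0=0$ there), the boundary point is a regular point of the operator and the Hopf lemma applies, giving $\frac{\p\psi_m}{\p\eta}>0$ unless $\psi_m$ is globally constant.

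For the rigidity statement, equality at some $p\in\partial\Omega$ forces $\frac{\p\psi_m}{\p\eta}=0$ there, so by the boundary Hopf lemma $\psi_m$ must be constant on $\Sigma$. Then Lemma \ref{lema_funcion_constante} applies verbatim: constancy of $\psi_m$ with $m\le\tfrac12$ forces $m=\tfrac12$ and $K_M$ constant, and $\Sigma$ is invariant under a $1$-parameter group of isometries of $\M^2(c)\times\R$ acting trivially on vertical lines. The classification of Section \ref{sec_ejemplos} then pins down $\Sigma$ to the three listed models: the rotationally invariant torus for $c>0$, the horizontal cylinder for $c=0$, and the cylinder invariant under hyperbolic translations for $c<0$ (the parabolic case having been excluded, since it produces no bigraphs). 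I expect the main obstacle to be the non-compact maximum principle itself: one must verify that the Ros--Rosenberg displacement argument applies cleanly to the operator $L$ (rather than to the Laplacian), i.e. that the first-order drift term $X$ does not interfere with the comparison of translated copies and with the Hopf lemma at the limiting boundary contact point, and that properness of the embedding guarantees the existence of such a contact point in the limit.
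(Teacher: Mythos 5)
Your overall strategy coincides with the paper's: the same function $\psi_m$, the same drift operator $L$ with $L\psi_m\geq0$ from (\ref{eqn:operador-psi}) (indeed $K_M\equiv c$ here, so the second term vanishes), the same boundary identity (\ref{eqn:kg}) converting $\frac{\p\psi_m}{\p\eta}\geq0$ into the curvature bound, and the equality case delegated to Lemma \ref{lema_funcion_constante}. The gap is in the middle step. You assert that ``the supremum of $\psi_m$ on all of $\Sigma$ equals its boundary value $g_m(0)$'' because ``no interior point of $\Sigma\sm V$ can strictly exceed it without forcing $\psi_m$ to be constant by the interior maximum principle for $L$.'' On a noncompact surface this deduction is invalid: the interior maximum principle only bites at a point where the maximum is \emph{attained}, and nothing you have said prevents $\sup_\Sigma\psi_m>0$ with the supremum unattained (the hypothesis $|h|\leq m\cdot\alpha$ controls $\psi_m$ only on the vertical set $V$, not elsewhere). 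Establishing $\sup_\Sigma\psi_m\leq0$ is precisely the entire content of the paper's proof of this theorem, and your sketch mislocates the role of the sliding argument: you invoke it merely to manufacture a Hopf boundary point, whereas once $\sup_\Sigma\psi_m\leq0$ is known, the Hopf lemma applies directly on the original $\Sigma$ at points of $\partial\Sigma$ (where $\psi_m\equiv0$ attains the maximum), with no limiting needed --- exactly as in the compact case.

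What the paper actually does to close this is a maximizing-sequence dichotomy, not an exhaustion. Take $\{p_n\}\subseteq\Sigma$ with $\psi_m(p_n)\to\sup_\Sigma\psi_m$. If $h(p_n)\to0$, then $\psi_m(p_n)\leq h(p_n)\to0$ (since $g_m(\nu)\leq0$ for $\nu\leq0$), so $\sup_\Sigma\psi_m\leq0$ and one is done. If instead $h(p_n)\to a>0$, then $\dist(p_n,\p\Sigma)$ is bounded away from zero (because $|\nabla h|\leq1$ and $h=0$ on $\p\Sigma$), so stability of the graph yields uniform curvature estimates around $p_n$; translating $\Sigma$ by ambient isometries of the homogeneous base $\M^2(c)$ so that each $p_n$ sits over a fixed point, one extracts a limit $H$-graph $\Sigma_\infty$ on which $\psi_{m,\infty}=h_\infty+g_m(\nu_\infty)$ attains an \emph{interior} maximum at $p_\infty$ (interior since $h_\infty(p_\infty)=a>0$). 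The interior maximum principle for $L$ then forces $\psi_{m,\infty}$ to be constant, equal to its boundary value $0$, whence $\sup_\Sigma\psi_m=0$ and the compact-case reasoning concludes. Note that it is here --- not in the Hopf lemma --- that properness, stability and the homogeneity of $\M^2(c)$ are used, and that your worry about the drift $X$ is harmless: all that is needed on the limit is the interior maximum principle for $\Delta+X$, and on the original surface the standard Hopf lemma at $\p\Omega$, where $\nu=\nu_0=0$ keeps the coefficients smooth. Your rigidity paragraph via Lemma \ref{lema_funcion_constante} agrees with the paper.
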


\begin{proof}
Let us consider the same function $\psi_m\in C^\infty(\Sigma)$ as before. If $\psi_m$ attained its maximum or $\sup_\Sigma\psi_m\leq0$, we could reason in the same way we did for the compact case and the proof would be finished. Otherwise, let us take a sequence $\{p_n\}\subseteq\Sigma^+=\{p\in\Sigma:h(p)>0\}$ such that $\{\psi_m(p_n)\}$ converges to $\sup\psi_m$, and distinguish two cases. 
\begin{itemize}
 \item If $\lim\{h(p_n)\}=0$, then $\psi_m(p_n)=h(p_n)+g_m(\nu(p_n))\leq h(p_n)\rightarrow 0$ from where $\sup_\Sigma\psi_m\leq 0$ and we are done.
 \item If $\{h(p_n)\}$ does not converge to zero, we can suppose that $\{h(p_n)\}\rightarrow a>0$ and $h(p_n)>\frac{a}{2}$ for all $n\in\mathbb{N}$ without loss of generality. Given $0<\epsilon<\frac{a}{2}$, as $\Sigma^+$ is stable and the distance from $p_n$ to $\p\Sigma^+$ is bounded away from zero, the surface $\Sigma_\epsilon=\{p\in\Sigma:h(p)>\epsilon\}$ has bounded second fundamental form. Ambient isometries allow us to translate $\Sigma_\epsilon$ horizontally so that $p_n$ is over some fixed point $q_0\in\M^2(c)$ and standard convergence arguments make possible to consider $\Sigma_\infty(\epsilon)$, the limit $H$-graph of a subsequence of these translated surfaces. The corresponding function in $\Sigma_\infty(\epsilon)$, given by $\psi_{m,\infty}=h_\infty+g_m(\nu_\infty)\in C^\infty(\Sigma_\infty(\epsilon))$, attains its maximum at the interior point $p_0=(q_0,a)\in\Sigma_\infty(\epsilon)$ (observe that there is convergence in the $C^m$ topology on compact subsets for every $m\in\mathbb{N}$). As $\psi_{m,\infty}$ is subharmonic on $\Sigma_\infty(\epsilon)$, we have that it is constant because of the maximum principle, and Lemma~\ref{lema_funcion_constante} implies that $\Sigma_\infty(\epsilon)$ can be extended to the upper half of one of the bigraphs listed in the statement of the theorem, up to a vertical translation, which will be denoted by $\widetilde\Sigma_\infty$. Note that $\widetilde\Sigma_\infty$ and can be supposed independent of $\epsilon$ by standard diagonal arguments (because decreasing $\epsilon$ just increases the size of the surfaces involved in the limit process). In this situation, $\sup_{\Sigma}\psi_m$ is the height in $\M^2(c)\times\R$ of a point $\widetilde p\in\partial\widetilde\Sigma_\infty$ satisfying $\nu_\infty(\widetilde p)=0$.

Now we prove that $h(\widetilde p)\leq0$ (which finishes the proof since $h(\widetilde p)=\sup_\Sigma\psi_m$). Arguing by contradiction, if $h(\widetilde p)>0$ we could consider $0<\epsilon<\frac{1}{2}h_\infty(\widetilde p)$ and, as the extended limit surface $\widetilde\Sigma_\infty$ does not depend on $\epsilon$, we would be able to find a subsequence of the translated surfaces of $\Sigma_\epsilon$ converging to the extended graph $\widetilde\Sigma_\infty$. Thus, $\widetilde\Sigma_\infty$ contains points at height as close to $\epsilon$ as desired, contradicting the fact that no point in $\widetilde\Sigma_\infty$ has lower height than $\widetilde p$ and $\epsilon<\frac{1}{2}h_\infty(\widetilde p)$.\qedhere
\end{itemize}
\end{proof}

\begin{obse}
Observe that, if the maximum heights of a sequence $\{\Sigma_n\}$ of such $H$-bigraphs tend to zero, then Theorem \ref{teor_homogeneos} insures that the geodesic curvatures of the boundaries diverge uniformly, in the sense that the bound only depends on that maximum height. Thus, the sequence of domains $\Omega_n\subseteq\M^2(c)$ over which $\Sigma_n$ is a bigraph cannot eventually omit any set in $\M^2(c)$ with non-empty interior.
\end{obse}

\section{Intrinsic length estimates}\label{sec:length-estimates}

\noindent Let $\Sigma\subseteq M\times\R$ be an $H$-graph over a compact domain $\Omega\subseteq M$ which extends continuously to the boundary with zero values. Suppose that $K_M\geq c>-4H^2$ in $\Sigma$ for some $c>0$. In Section \ref{sec_general_estimate} we proved that $\psi=h+g(\nu)$ is subharmonic in $\Sigma$, where $g$ is defined in (\ref{eqn:g}) so, if we suppose that $\nu\leq\nu_0$ along $\partial\Sigma$, then $h+g(\nu)\leq g(\nu_0)$, as a consequence of that $g$ is strictly increasing and $h$ vanishes on $\partial\Sigma$. Therefore, as $g$ is also an odd function, we derive that $g(-\nu)\geq h-g(\nu_0)$. Now we can invert the function $g$ and square both sides to obtain
\begin{equation}\label{eqn:angulo_maximo}
\nu^2\geq\zeta(h,\nu_0):=\begin{cases}
\frac{c+4H^2}{c}\tanh^2\left(\frac{\sqrt{c^2+4H^2c}}{4H}(h-g(\nu_0))\right)&\text{if }c<0,\\
H^2(h-g(\nu_0))^2&\text{if }c=0,\\
\frac{c+4H^2}{-c}\tan^2\left(\frac{\sqrt{-c^2-4H^2c}}{4H}(h-g(\nu_0))\right)&\text{if }c>0.\\
\end{cases}
\end{equation}

Let $\gamma:[a,b]\rightarrow\Sigma$ be a smooth curve which is parametrized by arc-length and let $\eta$ be a smooth unit vector field along $\gamma$, orthogonal to $\gamma'$ and $N$. Then, as $\{N,\gamma',\eta\}$ is an orthonormal frame, we have
\[E_3=\langle N,E_3\rangle E_3+\langle\gamma',E_3\rangle\gamma'+\langle\eta,E_3\rangle\eta,\]
and, since $\langle N,E_3\rangle=\nu$ and $\langle\gamma',E_3\rangle=h'(\gamma)$, we deduce that $1=\nu^2+h'(\gamma)^2+\langle\eta,E_3^\top\rangle^2$. Taking into account that $\langle\eta,E_3^\top\rangle^2\geq0$, we finally get $|h'|\leq\sqrt{1-\nu^2}$. Thus, plugging (\ref{eqn:angulo_maximo}) into this inequality, we have
\begin{equation}\label{acotacion_longitud}
\Long(\gamma)\geq\int_0^a\frac{|h'|}{\sqrt{1-\nu^2}}\dt t\geq\int_0^a\frac{-h'}{\sqrt{1-\zeta(h,\nu_0)}}\dt t=\int_{h(a)}^{h(0)}\frac{ds}{\sqrt{1-\zeta(s,\nu_0)}}.
\end{equation}
Considering all the curves that join a point $p$ with the boundary (along which the height vanishes), we obtain the following result:

\begin{teor}\label{length_estimate}
Let $\Sigma\subseteq M\times\R$ be an $H$-graph, $H>0$, over a compact domain $\Omega\subseteq M$ which extends continuously to the boundary with zero values and suppose that $c=\inf\{K_M(p):p\in\Omega\}>-4H^2$. If $\nu\leq\nu_0$ in $\partial\Omega$ for some $-1<\nu_0\leq 0$, then
\[\dist(p,\partial\Sigma)\geq\int_0^{h(p)}\frac{ds}{\sqrt{1-\zeta(s,\nu_0)}}.\]
Furthermore, if there exists $p\in\Sigma$ such that equality holds, then $\Omega$ has constant curvature and $\Sigma$ is a spherical cap of a rotationally invariant sphere.
\end{teor}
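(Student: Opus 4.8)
The plan is to assemble the distance estimate from the pointwise angle bound (\ref{eqn:angulo_maximo}) and the length estimate (\ref{acotacion_longitud}) already derived in the excerpt, and then to analyze the equality case by tracing back through each inequality used.

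First I would establish the inequality itself. Fix an interior point $p\in\Sigma$ and let $\gamma:[0,\ell]\to\Sigma$ be any smooth arc-length curve with $\gamma(0)=p$ and $\gamma(\ell)\in\partial\Sigma$. The chain of inequalities in (\ref{acotacion_longitud}) gives
\[\Long(\gamma)\geq\int_0^{h(p)}\frac{\dt s}{\sqrt{1-\zeta(s,\nu_0)}},\]
where I have used that $h(\gamma(\ell))=0$ and $h(\gamma(0))=h(p)$, together with the substitution $s=h(\gamma(t))$. The integrand on the right depends only on the endpoint height $h(p)$ and on $\nu_0$, so the right-hand side is a constant independent of the chosen curve $\gamma$. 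Taking the infimum over all such curves (the distance $\dist(p,\partial\Sigma)$ is realized by minimizing length over curves joining $p$ to the boundary), I obtain the stated lower bound. The three inequalities feeding into this are: $|h'|\leq\sqrt{1-\nu^2}$ from the orthonormal decomposition of $E_3$; the pointwise bound $\nu^2\geq\zeta(h,\nu_0)$ from subharmonicity of $\psi$; and the triangle-type passage from arbitrary curves to the infimum.

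For the equality case, suppose $\dist(p,\partial\Sigma)$ equals the right-hand side for some $p$. Then there is a length-minimizing geodesic $\gamma$ from $p$ to $\partial\Sigma$ realizing equality throughout (\ref{acotacion_longitud}), which forces three equalities simultaneously along $\gamma$. The equality $\langle\eta,E_3^\top\rangle^2=0$ forces $E_3^\top$ to be tangent to $\gamma$ (i.e. $\gamma$ is a gradient line of $h$); the equality $\nu^2=\zeta(h,\nu_0)$ along $\gamma$ means the subharmonic function $\psi=h+g(\nu)$ achieves its boundary maximum value $g(\nu_0)$ all along $\gamma$, hence in particular at the interior point $p$. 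Since $\psi$ is subharmonic (equation (\ref{eqn:laplaciano}) with $\nu\leq0$ and $K_M\geq c$), an interior maximum forces $\psi$ to be constant on all of $\Sigma$ by the strong maximum principle, which in turn forces $q\equiv0$ and $K_M\equiv c$ on $\Omega$, exactly as in the proof of Theorem \ref{teor:acotacion_kg}. By the cited \cite[Lemma 6.1]{esr3} this yields that $\Omega$ has constant curvature $c$ and $\Sigma$ is invariant under a $1$-parameter group of isometries acting trivially on vertical lines.

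The main obstacle I expect is the \emph{identification} of the invariant surface as a rotationally invariant spherical cap rather than one of the other invariant families (cylinders, tori) classified in Section \ref{sec_ejemplos}. The decisive extra information is the capillarity boundary condition $\nu=\nu_0\leq0$ on $\partial\Omega$ together with the fact that $\psi\equiv g(\nu_0)$ forces $h=0$ precisely where $\nu=\nu_0$; combined with the gradient-line structure of $\gamma$ and the requirement that the surface close up as a graph over a compact domain, this singles out the rotational spheres (whose spherical caps meet the slice with constant angle $\nu_0$) among the invariant examples. Once invariance is established, matching the boundary data $\nu=\nu_0$ and the graph condition against the explicit solutions of Section \ref{sec_ejemplos_rotacionales} completes the characterization; this final matching is a direct computation rather than a conceptual difficulty.
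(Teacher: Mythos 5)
Your derivation of the inequality is correct and is exactly the paper's route: the chain (\ref{acotacion_longitud}) applied to an arbitrary curve from $p$ to $\partial\Sigma$, using $\nu^2\geq\zeta(h,\nu_0)$ from the subharmonicity of $\psi=h+g(\nu)$, followed by taking the infimum (and you rightly use $|h'|\leq\sqrt{1-\nu^2}$, correcting the typo $|h'|\leq 1-\nu^2$ in the text). The first half of your rigidity analysis is also the intended one: equality along a minimizing geodesic forces $\psi(p)=g(\nu_0)=\max_\Sigma\psi$ at the interior point $p$, the strong maximum principle makes $\psi$ constant, and (\ref{eqn:laplaciano}) then yields $q\equiv0$ and $K_M\equiv c$ on $\Omega$. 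One small misstatement: you invoke ``the capillarity boundary condition $\nu=\nu_0$ on $\partial\Omega$,'' but the theorem only assumes $\nu\leq\nu_0$ there; this is harmless provided you \emph{derive} $\nu=\nu_0$ on $\partial\Omega$ from $\psi\equiv g(\nu_0)$, $h=0$ on $\partial\Sigma$ and the strict monotonicity of $g$, rather than assume it.

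The genuine gap is your final identification step. You propose to single out the rotational spheres among the invariant examples by ``matching the boundary data $\nu=\nu_0$,'' the graph condition, and compactness of $\Omega$ --- but these criteria do \emph{not} exclude the other invariant families. For $\nu_0=0$, the upper half of the rotationally invariant torus in $\Sp^2\times\R$ (Section \ref{sec_ejemplos_rotacionales}, case $c_0=0$) is an $H$-graph over a \emph{compact} annulus, with zero boundary values, meeting the slice with constant angle $\nu=0$; the hyperbolic cylinders of Proposition \ref{unicidad_cilindro_H2xR} give the same boundary picture in $\H^2\times\R$. So boundary matching cannot be ``the decisive extra information.'' What actually excludes these examples is a fact you already established but then failed to use: constancy of $\psi$ forces $q\equiv0$, i.e.\ the Abresch--Rosenberg differential vanishes identically, and since $4H^2+c>0$ the only surfaces with $Q\equiv0$ are pieces of the rotationally invariant spheres; the graph condition with zero boundary values then makes $\Sigma$ a spherical cap. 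Equivalently, in the notation of Section 4, the tori and hyperbolic cylinders are characterized by $h+g_{1/2}(\nu)\equiv\mathrm{const}$ (Lemma \ref{lema_funcion_constante} with $m=\tfrac12$), so along them $\psi=h+g(\nu)=\tfrac12 g(\nu)$ is non-constant (here $g_m=mg$, by (\ref{eqn:g-general})), and equality in the distance estimate fails at their interior points. Replacing your boundary-matching argument by this use of $q\equiv0$ closes the gap.
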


Theorem \ref{length_estimate} is a comparison result which may be understood in the following way: take $\Sigma$ in the conditions of the statement and $S\subset\M^2(c)\times\R^+$ a rotationally invariant spherical cap with the same mean curvature $H$ and making a constant angle $\nu_0$ with the slice $\M^2(c)\times\{0\}$. Then, for any $p\in\Sigma$, the distance $d(p,\partial\Sigma)$ is at least $d(q,\partial S)$, where $q$ is any point in $S$ satisfying $h(q)=h(p)$.


\begin{thebibliography}{10}

\bibitem{aleg1}
J.~Aledo, J.~Espinar, and J.~G\'alvez.
\newblock Height estimates for surfaces with positive constant mean curvature
  in $\mathbb{M}^2 \times \mathbb{R}$.
\newblock {\em Illinois J. Math.}, 52(1):203--211, 2008.

\bibitem{Alex}
A.~D.~Alexandrov.
\newblock Uniqueness theorems for surfaces in the large I.
\newblock {\em Vestnik Leningrad Univ. Math.}, 11:5-17, 1956.

\bibitem{gb1}
K.~Gro\ss e~Brauckmann.
\newblock New surfaces of constant mean curvature.
\newblock {\em Math. Z.}, 214:527--565, 1993.

\bibitem{esr3}
J.~M. Espinar and H.~Rosenberg.
\newblock Complete constant mean curvature surfaces in homogeneous spaces.
\newblock To appear in {\em Comment. Math. Helv.}, 2009.

\bibitem{fm2}
I.~Fern\'{a}ndez and P.~Mira.
\newblock A characterization of constant mean curvature surfaces in homogeneous
  3-manifolds.
\newblock {\em Diff. Geom. Appl.}, {25}:281--289, 2007.

\bibitem{hshs1}
W.~T. Hsiang and W.~Y. Hsiang.
\newblock On the uniqueness of isoperimetric solutions and imbedded soap
  bubbles in noncompact symmetric spaces. {I}.
\newblock {\em Invent. Math.}, 98:39--58, 1989.

\bibitem{MT}
J.~M.~Manzano and F.~Torralbo.
\newblock New examples of constant mean curvature surfaces in $\mathbb{S}^2\times\R$ and $\mathbb{H}^2\times\R$.
\newblock Preprint, 2011. arXiv:1104.1259 [math.DG].

\bibitem{MazPac}
R.~Mazzeo and F.~Pacard.
\newblock Foliations by constant mean curvature tubes.
\newblock {\em Comm. Anal. Geom.}, 13:633--670, 2005.

\bibitem{mpr19}
W.~H.~Meeks~III, J.~P\'{e}rez, and A.~Ros.
\newblock Stable constant mean curvature surfaces.
\newblock In {\em Handbook of Geometrical Analysis}, volume~1, pages 301--380.
  International Press, edited by Lizhen Ji, Peter Li, Richard Schoen and Leon
  Simon, ISBN: 978-1-57146-130-8, 2008.

\bibitem{onnis1}
Irene~I.~Onnis.
\newblock Invariant surfaces with constant mean curvature in
  {$\mathbb{H}^2\times \mathbb{R}$}.
\newblock {\em Ann. Mat. Pura Appl.}, 187(4):667--682, 2008.

\bibitem{paph1}
L.~E. Payne and G.~A. Philippin.
\newblock Some maximum principles for nonlinear elliptic equations in
  divergence form with applications to capillary surfaces and to surfaces of
  constant mean curvature.
\newblock {\em Nonlinear Analysis: Theory, Methods \& Applications}, 3(2):193--211, 1979.


\bibitem{ped1}
R.~Pedrosa.
\newblock The isoperimetric problem in spherical cylinders.
\newblock {\em Ann. Global Anal. Geom.}, {26}(4):333--354, 2004.

\bibitem{rit1}
M.~Ritor\'{e}.
\newblock Examples of constant mean curvature surfaces obtained from harmonic
  maps to the two sphere.
\newblock {\em Math. Z.}, {226}:127--146, 1997.

\bibitem{ror1}
A.~Ros and H.~Rosenberg.
\newblock Properly embedded surfaces with constant mean curvature.
\newblock \emph{Amer. J. Math.}, {132}:1429--1443, 2010.

\bibitem{rose2}
H.~Rosenberg.
\newblock Minimal surfaces in {${M}^2 \times \mathbb{R}$}.
\newblock {\em Illinois J. of Math.}, {46}:1177--1195, 2002.

\bibitem{earp1}
R.~{S}a Earp.
\newblock Parabolic and hyperbolic screw motion surfaces in
  {$\mathbb{H}^2\times\mathbb{R}$}.
\newblock {\em J. Aust. Math. Soc.}, {85}(1):113--143, 2008.

\end{thebibliography}
\end{document}